\newcommand{\be}{\begin{equation}}
\newcommand{\ee}{\end{equation}}
\newcommand{\beq}{\begin{eqnarray}}
\newcommand{\eeq}{\end{eqnarray}}
\newtheorem{thm}{Theorem}[section]
\newtheorem{lma}{Lemma}[section]
\newtheorem{prop}{Proposition}[section]
\theoremstyle{remark}
\newtheorem{rem}{Remark}[section]
\numberwithin{equation}{section}
\def\p{\partial}
\def\R{\mathbb{R}}
\def\tr{{\rm tr}}
\def\p{\partial}
\def\lf{\left}
\def\ri{\right}
\def\e{\epsilon}
\def\R{\Bbb R}
\def\Ric{\text{\rm Ric}}
\def\Pi{\overline{\displaystyle{\mathbb{II}}}}
\def\a{\alpha}
\def\b{\beta}
\def\ii{\sqrt{-1}}
\def\heat{\lf(\Delta -\frac{\p}{\p t}\ri)}
\def\heatt{\lf(\frac{\p}{\p t}-\Delta\ri)}
\def\K{K\"ahler }
\def\ddbar{\partial\bar{\partial}}
\def\aint{\frac{\ \ }{\ \ }{\hskip -0.4cm}\int}
 \def\be{\begin{equation}}
 \def\ee{\end{equation}}
\def\bee{\begin{equation*}}
 \def\eee{\end{equation*}}
\begin{document}

\title[Poincar\'e-Lelong equation]{\bf Poincar\'e-Lelong equation via the Hodge Laplace heat equation}

\author{ Lei Ni}
\email{lni@math.ucsd.edu}

\address{Department of Mathematics\\
         University of California at San Diego\\
         La Jolla, CA 92093}

\author{ Luen-Fai Tam}
\email{lftam@math.cuhk.edu.hk}

\address{The Institute of Mathematical Sciences and Department of
 Mathematics\\
The Chinese University of Hong Kong\\
Shatin, Hong Kong, China}

%    \thanks will become a 1st page footnote.

\classification{ 53C21 (primary), 32Q15, 35B40, 35K05(secondary).}

\keywords{Poincar\'e-Lelong equation, Hodge-Laplacian heat equation, K\"ahler manifolds, Convex exhaustion.  }

\thanks{The first author is partially supported by NSF grant DMS-1105549.  The second author is partially supported by  Hong Kong RGC General Research Fund
\#CUHK 403011}

\begin{abstract} In this paper, we develop a method of solving the Poincar\'e-Lelong equation, mainly via the study of the large time asymptotics of a global solution to the Hodge-Laplace heat equation on $(1, 1)$-forms. The method is effective in proving an optimal result when $M$ has nonnegative bisectional curvature. It also provides an alternate proof of a recent gap theorem of the first author.
\end{abstract}

\maketitle

\section{Introduction}

Solving the Poincar\'e-Lelong equation amounts to,  for a given real $(1,1)$-form $\rho$,  finding a smooth function $u$ such that $\sqrt{-1}\partial \bar{\partial} u=\rho$. Motivated by geometric considerations, on a complete noncompact K\"ahler manifold $(M, g)$, this was first studied by Mok et al. \cite{MSY} under some restrictive conditions including a point-wise quadratic decay on $\|\rho\|$, nonnegative bisectional curvature and maximum volume growth on $M$. There have been many publications since then (e.g. \cite{Ni-98}, \cite{NST}, \cite{Fan2007}). Finally in \cite{NT-2003}, the following result was proved.
\begin{thm}\label{pl}
Let $M^n$ (with $m=2n$ being the real dimension) be a complete K\"ahler manifold
with nonnegative holomorphic bisectional curvature.  Let $\rho$ be
a real $d$-closed $(1,1)$-form. Suppose that
\begin{equation}\label{ass1}
 \int_0^\infty  \left(\aint_{B_o(s)}||\rho||(y)d\mu(y)\right) ds<\infty,
\end{equation}
 and
 \begin{equation}\label{extr2}
\liminf_{r\to\infty}\lf[\exp\lf(-\a r^2\ri)\cdot\int_{B_o(r)}||\rho||^2(y) d\mu(y)\ri]<\infty
\end{equation}
for some $\a>0$.
 Then there is a solution $u$ of
the Poincar\'e-Lelong equation $$\sqrt{-1}\ddbar u =\rho.$$
Moreover, for any $0<\epsilon<1$, $u$ satisfies
\begin{eqnarray}\label{bound}
  \alpha_1r\int_{2 r}^\infty k(s)ds&+&\beta_1\int_0^{2r}sk(s)ds
 \ge u(x)\nonumber\\
 &\ge& \beta_3\int_{0}^{2r}sk(s)ds
 -\alpha_2r\int_{2r}^\infty k(s)ds-\beta_2\int_0^{\epsilon
r}sk(x,s)ds
\end{eqnarray}
for some positive constants $\alpha_1(m)$, $\alpha_2(m,\epsilon)$
and $\beta_i(m)$, $1\le i\le 3$, where $r=r(x)$. Here  $k(x,s)= \aint_{B_x(s)}\|\rho\|$ and $k(s)=k(o,s)$, where $o\in M$ is  fixed.
\end{thm}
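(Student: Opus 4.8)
The plan is to obtain $u$ as the $t\to\infty$ limit of time-integrated ``trace potentials'' attached to the solution of the Hodge--Laplace heat flow on $(1,1)$-forms with initial data $\rho$. First I would solve $\partial_t\rho(t)=-\tfrac12\Delta_H\rho(t)$, $\rho(0)=\rho$, on $M\times(0,\infty)$ by the standard exhaustion by compact domains with Dirichlet data, the a priori bound below allowing passage to the limit, and with uniqueness of the resulting global solution in a reasonable class being exactly what the $L^2$-exponential bound \eqref{extr2} provides (a Karp--Li type argument). Since $M$ is K\"ahler the flow commutes with $d$, with complex conjugation and with the bidegree decomposition, so $\rho(t)$ stays a real $d$-closed $(1,1)$-form for all $t$. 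The mechanism behind the method is a curvature-free identity: for a $d$-closed $(1,1)$-form the K\"ahler relations together with $\partial\rho=\bar\partial\rho=0$ give $\Delta_H\rho=-2\sqrt{-1}\,\partial\bar\partial(\Lambda\rho)$, with \emph{no} Bochner term; hence, writing $f(x,t):=\Lambda\rho(x,t)=\operatorname{tr}_{\mathbb C}\rho(x,t)$, the flow reads $\partial_t\rho(t)=\sqrt{-1}\,\partial\bar\partial f(t)$, and applying $\Lambda$ shows that $f$ is itself a solution of the scalar heat equation with $f(0)=\operatorname{tr}_{\mathbb C}\rho$.

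Next, since $M$ has nonnegative holomorphic bisectional curvature, the Bochner--Weitzenb\"ock inequality gives $(\Delta-\partial_t)\|\rho(t)\|\ge 0$ in the barrier sense --- this is the one point where bisectional, not merely Ricci, nonnegativity enters, the curvature operator on $(1,1)$-forms being controlled by it --- and so $\|\rho(t)\|(x)\le\int_M H(x,y,t)\,\|\rho\|(y)\,d\mu(y)$ by the maximum principle, $H$ being the heat kernel of $M$. Integrating the flow identity and subtracting the base-point value, set
\[
w(x,t):=-\int_0^t\big(f(x,s)-f(o,s)\big)\,ds,\qquad\text{so that}\qquad \sqrt{-1}\,\partial\bar\partial w(\cdot,t)=\rho-\rho(t)\quad(t>0),
\]
the $f(o,s)$-subtraction being invisible to $\partial\bar\partial$ but indispensable for convergence, e.g.\ when $M$ is parabolic. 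It then remains to let $t\to\infty$. Feeding the Li--Yau heat-kernel bound and Bishop--Gromov volume comparison (valid since $\operatorname{Ric}\ge 0$) into the last estimate bounds $\|\rho(t)\|(x)$ by a dyadic sum $\sum_{j\ge -1}e^{-c4^{j}}2^{jm}k(x,2^{j}\sqrt t)$; together with the elementary consequence of \eqref{ass1} that $\int_{B_o(s)}\|\rho\|=o(s^{m-1})$, this yields $\rho(t)\to 0$ locally uniformly, hence in $C^\infty_{\mathrm{loc}}$ by parabolic regularity. For the potential term, $|f(x,s)-f(o,s)|\le C(n)\int_M|H(x,y,s)-H(o,y,s)|\,\|\rho\|(y)\,d\mu(y)$, and the heat-kernel gradient estimate supplies the extra decay $\sim d(x,o)/\sqrt s$ once $s\gtrsim d(x,o)^2$; the same dyadic splitting shows $w(\cdot,t)$ converges in $C^\infty_{\mathrm{loc}}$ to a smooth function $u$, and passing to the limit in the displayed identity gives $\sqrt{-1}\,\partial\bar\partial u=\rho$. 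The two-sided bound \eqref{bound} is extracted from these same computations: with $u(x)=-\int_0^\infty(f(x,s)-f(o,s))\,ds$, splitting the $s$-integral at $s\sim r^2$ ($r=r(x)$) and summing the dyadic pieces produces the tail $r\int_{2r}^\infty k(s)\,ds$ and the main term $\int_0^{2r}sk(s)\,ds$, the lower bound carrying the additional one-sided term $\int_0^{\epsilon r}sk(x,s)\,ds$ because $\operatorname{tr}_{\mathbb C}\rho$ may be negative near $x$ and must be discarded when bounding $f(x,s)$ from below.

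The main obstacle is this last passage $t\to\infty$: proving the $C^\infty_{\mathrm{loc}}$ convergence of the normalized time integrals, together with $\rho(t)\to 0$, under only the \emph{averaged} hypotheses \eqref{ass1}--\eqref{extr2}, and with enough quantitative precision to read off \eqref{bound}. This is exactly where the averaged form of \eqref{ass1} --- rather than a pointwise decay of $\|\rho\|$ --- is essential, and what forces the careful dyadic decomposition of the heat-kernel integrals, the repeated use of volume doubling, and the heat-kernel gradient estimate; a secondary, more bookkeeping-type difficulty is fixing the uniqueness class for the flow $\rho(t)$ and for $f$, which is the role of \eqref{extr2}. I would stress that, since the identity $\sqrt{-1}\,\partial\bar\partial w(\cdot,t)=\rho-\rho(t)$ reproduces $\rho$ itself --- not merely its trace --- for every finite $t$, no separate Liouville theorem for harmonic $(1,1)$-forms is needed to pass to the limit; the heat flow does that work.
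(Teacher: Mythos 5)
Your overall architecture --- run the Hodge--Laplace heat flow on $(1,1)$-forms starting from $\rho$, use the K\"ahler identities to rewrite the flow of a \emph{closed} $(1,1)$-form as $\partial_t\rho(t)=\sqrt{-1}\,\partial\bar\partial(\Lambda\rho(t))$, integrate in time, and show the form part tends to zero --- is exactly the strategy of this paper (Theorem \ref{thm-PL-general} together with Sections 3--4). But there is a genuine gap at the one step you dispose of in a single clause: ``since $M$ is K\"ahler the flow commutes with $d$, \dots so $\rho(t)$ stays $d$-closed.'' On a complete noncompact manifold, with initial data that is not in $L^2$, this is not automatic, and it is precisely the central difficulty. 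Commutation with conjugation and with the bidegree decomposition is harmless, because those operations preserve the pointwise norm and so uniqueness in a norm-growth class transfers to them; but $d\rho(t)$ is a $3$-form built from \emph{first derivatives} of $\rho(t)$, and the hypotheses (averaged control of $\|\rho\|$ at $t=0$, plus \eqref{extr2}) give no a priori bound on $\|d\rho(t)\|$ placing it in any uniqueness class for the $3$-form heat equation: interior parabolic estimates degenerate as $t\to0$ and cannot feed a Karp--Li argument. Nor can one run a maximum principle on $\|d\rho(t)\|$ directly, since the Bochner curvature term on $(2,1)$-forms (the condition the paper abbreviates (NCF) in its final remarks) is \emph{not} known to follow from nonnegative bisectional curvature --- the paper explicitly leaves this as a speculation. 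Your exhaustion-by-Dirichlet-data construction does not help either: the approximants are not closed, and $d$ of them satisfies no usable boundary condition. Note also that the identity $\Delta_H\rho(t)=-2\sqrt{-1}\,\partial\bar\partial(\Lambda\rho(t))$, on which everything else rests, is itself conditional on this unproved closedness.

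The paper's resolution is the actual content of Sections 3--4: it never controls $d\eta$ itself, but shows instead that the three traces $\Lambda\partial\eta$, $\Lambda\bar\partial\eta$, $\Lambda\partial\bar\partial\eta$ vanish --- these are a $(1,0)$-form, a $(0,1)$-form and a scalar quantity, so only $\operatorname{Ric}\ge0$ (plus nonnegative quadratic orthogonal bisectional curvature for $\|\eta\|$ itself) enters the relevant Bochner formulas --- and then observes (Lemma \ref{lma-adjoint1} and Remark \ref{rem-general-1}) that these vanishings both yield the needed identity $\partial\bar\partial\Lambda\eta=\sqrt{-1}\Delta_{\bar\partial}\eta$ and imply $d\eta=0$ a posteriori. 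Even this requires a double approximation, cutting off $\rho$ by functions $\phi^{(i)}$ with $\|\partial\bar\partial\phi^{(i)}\|\le C/R_i$ built from a heat-regularized exhaustion function, so that the approximating traces are bounded and dominated by $C\|\rho\|/R_i$. You would need to import this mechanism, or an equivalent substitute, for your argument to close. The remaining ingredients of your sketch --- the sub-mean-value bound $\|\rho(t)\|\le\int_M H(\cdot,y,t)\|\rho\|(y)\,d\mu(y)$, its decay under \eqref{ass1}, and the construction of $u$ as the renormalized time-integral of the trace, which coincides with the Green's-function potential of \cite{NST,NT-2003} from which the estimate \eqref{bound} is actually extracted --- are sound and consistent with the paper.
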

 Due to the technical nature of the assumption (\ref{extr2}), which arises from the parabolic method employed in \cite{NT-2003},  and is related to the uniqueness of the heat equation, it is desirable to be able to remove it.
The purpose of this paper is to prove:

 \begin{thm}
Let $(M^n,g)$ be a complete noncompact \K manifold (of complex dimension $n$) with nonnegative Ricci curvature and nonnegative quadratic orthogonal bisectional curvature.    Suppose that $\rho$ is a smooth $d$-closed   real $(1,1)$-form on $M$ and let $f=||\rho||$ be the norm of $\rho$.  Suppose that
\be\label{eq-rho decay}
\int_0^\infty k_f(r)dr<\infty.
\ee
Then there is a smooth function $u$ so that $\rho=\ii \p\bar\p u$. Moreover, for any $0<\e<1$, the estimate (\ref{bound}) holds.
\end{thm}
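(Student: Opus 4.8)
The plan is to solve the Hodge--Laplace heat equation on real $(1,1)$-forms,
\[
\frac{\p}{\p t}\Omega=-\Delta_H\Omega,\qquad \Omega(0)=\rho,\qquad \Delta_H=dd^*+d^*d,
\]
and to read the potential $u$ off from the large-time behaviour of $\Omega(t)$. Since $d\rho=0$ and $\Delta_H$ commutes with $d$, the solution stays $d$-closed and real of type $(1,1)$. The first, and I expect principal, task is to produce a \emph{global} solution $\Omega$ on $[0,\infty)\times M$ with these structural properties and with the sharp pointwise bound
\[
\|\Omega(x,t)\|\le\int_M H(x,y,t)\,\|\rho\|(y)\,d\mu(y),
\]
$H$ the heat kernel of $M$. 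I would obtain it by exhausting $M$ by a nested family of convex domains $D_k$ (the convex exhaustion), solving the initial--boundary value problems on the $D_k$, and passing to the limit; convexity of $\p D_k$ is what keeps the flow $d$-closed and gives the boundary integrals in the relevant energy identities the favourable sign. The curvature hypothesis enters through the Weitzenb\"ock formula $-\Delta_H=\Delta-\mathcal Q$, with $\mathcal Q$ the zeroth-order curvature operator: nonnegativity of the Ricci curvature together with nonnegativity of the quadratic orthogonal bisectional curvature is precisely the condition making $\la\mathcal Q\Omega,\Omega\ra\ge0$ on real $(1,1)$-forms. Hence $(\p_t-\Delta)\|\Omega\|^2=-2|\nabla\Omega|^2-2\la\mathcal Q\Omega,\Omega\ra\le0$, so $(\p_t-\Delta)\|\Omega\|\le0$ in the barrier sense, which yields the displayed bound first for the $D_k$-solutions and then for the limit. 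This direct construction is what replaces the uniqueness argument of \cite{NT-2003}, and so dispenses with the hypothesis (\ref{extr2}).

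With $\Omega$ in hand I would extract the potential. On a \K manifold, for a $d$-closed real $(1,1)$-form $\Omega$ one has the Kähler identity $\bar\p^*\Omega=\ii\,\p(\Lambda\Omega)$ (using $\p\Omega=0$), whence $-\Delta_H\Omega=2\,\ii\,\p\bar\p(\Lambda\Omega)$. Setting $g(t):=\Lambda\Omega(t)=\tr_g\Omega(t)$, which solves the scalar heat equation with $g(0)=\tr_g\rho$ and obeys $|g|\le c_n\|\Omega\|$, and
\[
u(x,t):=-2\int_0^t\big(g(x,s)-g(o,s)\big)\,ds,
\]
one gets $\ii\,\p\bar\p\,u(\cdot,t)=\rho-\Omega(\cdot,t)$ for all $t$, with $u(o,t)=0$ (subtracting $g(o,s)$, which is independent of $x$, does not affect $\ii\,\p\bar\p$).

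It remains to let $t\to\infty$. Nonnegative Ricci curvature gives volume doubling and the Li--Yau Gaussian bounds for $H$; with $\int_0^\infty k_f(r)\,dr<\infty$ these force $\|\Omega(x,t)\|\le C\,k_f(x,\sqrt t)\to0$, so $\ii\,\p\bar\p\,u(\cdot,t)\to\rho$ locally uniformly. For the convergence of $u(\cdot,t)$ itself I would split the time integral at $s\sim r(x)^2$: on $[0,r(x)^2]$ one estimates $|g(x,s)|$ and $|g(o,s)|$ by heat-kernel convolution of $\|\rho\|$ and reorganizes the resulting integrals, as in \cite{NT-2003}, into $\int_0^{2r}s\,k(s)\,ds$ (plus the $\int_0^{\e r}s\,k(x,s)\,ds$ correction needed for the lower estimate); on $[r(x)^2,\infty)$ one uses a gradient estimate for the heat equation to bound $|g(x,s)-g(o,s)|\lesssim r(x)\,s^{-1/2}k_f(o,C\sqrt s)$, which integrates to a bounded multiple of $r(x)\int_{2r(x)}^\infty k(s)\,ds$. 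This gives convergence of $u(\cdot,t)$ in $C^0_{\mathrm{loc}}$, hence by interior parabolic estimates in $C^\infty_{\mathrm{loc}}$, to a smooth $u$ with $\ii\,\p\bar\p\,u=\rho$; and the same bookkeeping, with a parabolic Harnack inequality for the lower estimate, yields the two-sided bound (\ref{bound}) exactly as in the proof of Theorem \ref{pl}. (When $M$ is parabolic the hypothesis is far more restrictive and the argument simplifies.)

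The main obstacle is the construction in the first step: producing the global solution $\Omega$ with the sharp heat-kernel bound, and above all with $d$-closedness preserved, in the absence of any uniqueness theorem — this is where the convex exhaustion is essential. Everything after that — the potential formula, the passage to the limit, and the estimate (\ref{bound}) — is a quantitative rerun of the scalar heat-equation analysis already underlying Theorem \ref{pl}.
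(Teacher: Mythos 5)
Your overall strategy --- run the Hodge--Laplace heat flow on $(1,1)$-forms, use the K\"ahler identity $-\Delta_d\Omega=2\ii\p\bar\p(\Lambda\Omega)$ for $d$-closed forms to write $\ii\p\bar\p u(\cdot,t)=\rho-\Omega(t)$, and let $t\to\infty$ --- is the paper's. But the step you yourself identify as ``the main obstacle,'' namely producing a \emph{global} solution that remains $d$-closed and satisfies $\|\Omega\|(x,t)\le\int_M H(x,y,t)\|\rho\|(y)d\mu(y)$, is exactly where your argument has a genuine gap, and the proposed fix (a convex exhaustion $\{D_k\}$) does not work under the stated hypotheses. No convex exhaustion is known to exist on a complete K\"ahler manifold assuming only nonnegative Ricci curvature and nonnegative quadratic orthogonal bisectional curvature; and even granting one, ``convexity keeps the flow $d$-closed'' is an assertion rather than an argument --- with absolute boundary conditions closedness is preserved for formal reasons having nothing to do with convexity, while the favourable sign of the boundary terms needed for the pointwise heat-kernel bound is a separate claim you would still have to prove.

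The paper avoids all of this. It cuts off the initial data, $\rho^{(i)}=\phi^{(i)}\rho$, with $\phi^{(i)}$ built from the heat-regularized distance function $\zeta(\cdot,1)$, whose complex Hessian is bounded precisely because of NQOB (Proposition \ref{prop-pre}); this replaces any convex exhaustion. It then solves Dirichlet-type initial--boundary value problems \eqref{eq-heat-form-2} on an arbitrary exhaustion and passes to the limit to get $\eta^{(i)}$ and $\eta$, which are \emph{not} closed a priori. Closedness of $\eta$ is recovered a posteriori: the quantities $\Lambda\p\eta^{(i)}$, $\Lambda\bar\p\eta^{(i)}$ and $\int_0^t\Lambda\p\bar\p\eta^{(i)}$ are dominated, via Bochner inequalities and the maximum principle, by heat convolutions of the corresponding quantities of $\rho^{(i)}$, which are $O(R_i^{-1})\|\rho\|$ by \eqref{eq-pre-2} and hence vanish in the limit (Lemmas \ref{lma-global-1}--\ref{lma-global-2}); the algebraic identity of Remark \ref{rem-general-1} then upgrades $\Lambda\p\eta=\Lambda\bar\p\eta=\Lambda\p\bar\p\eta=0$ to $d\eta=0$. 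To complete your proof you must either carry out this contraction argument or actually construct the exhaustion and do the boundary analysis; as written the central step is missing. Two smaller points: the Bochner inequality on $(1,1)$-forms needs only NQOB, while nonnegative Ricci is what handles the $(1,0)$-forms $\Lambda\p\eta$; and identifying your $u=-2\int_0^\infty(\Lambda\Omega(x,s)-\Lambda\Omega(o,s))\,ds$ with the Green's-function solution of $\Delta u=\tr(\rho)$ --- which is how the paper inherits \eqref{bound} from \cite{NST,NT-2003} --- requires the uniqueness statement of Lemma \ref{lma-Li-Tam} rather than ``the same bookkeeping.''
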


  Note that in the theorem  we only require $(M^n, g)$ has nonnegative Ricci curvature and nonnegative quadratic orthogonal bisectional curvature (see Section 3 for the definition), which is weaker than the nonnegativity of the bisectional curvature (see also Section 7 for the discussion on the relations between various curvature conditions). The current result is also   more general than those in a previous version of our work and the proof is less involved.

The solution space to a Poincar\'e-Lelong equation clearly is an affine space consisting  $u_s+u_h$ with $u_s$ being a special solution and $u_h$ being any element of the linear space of the pluriharmonic functions. The estimate (\ref{bound}) selects the minimum one among them. In the view that on K\"ahler manifolds with nonnegative Ricci curvature, the sublinear growth is  the optimal necessary condition to imply  the constancy of a pluriharmonic function, the assumption (\ref{ass1}) is almost the optimal condition which  one can expect to ensure that  estimate (\ref{bound}) selects the unique (up to a constant) solution.

 The use of the Hodge-Laplace heat equation here is motivated by that of \cite{PL-remove}. On the other hand here we establish the closedness of the global $(1,1)$-form solution to the Hodge-Laplace heat equation,  and in Section 5  obtain an alternate proof to the gap theorem in \cite{PL-remove} without appealing  to the relative monotonicity.

\begin{acknowledgements} The authors would   like to thank Gilles Carron for pointing out a mistake in a previous version of this paper. The authors   would  also like to thank Alexander Grigoryan for useful discussions.
\end{acknowledgements}

\section{A general method of solving the Poincar\'e-Lelong equation}

Let $(M^n,g)$ be a complete noncompact K\"ahler manifold with complex dimension $n$ with \K form. Let $\rho$ be a  real (1,1) form. If in a local coordinate $\rho=\sqrt{-1}\rho_{i\bar{j}}dz^i\wedge dz^{\bar{j}}$ we denote the  trace of $\rho$, $\sum_{i, j=1}^n g^{i\bar{j}} \rho_{i\bar{j}}$  by $\tr(\rho)$ and is equal to $\Lambda \rho$, where $\Lambda$ is the adjoint of $L$ with $L\sigma=\omega\wedge\sigma$.

Let $\Delta=-\Delta_d=-(d\delta+\delta d)$ be the Hodge Laplacian for forms. On K\"ahler manifolds it is well known that $\Delta_d=2\Delta_{\p}=2\Delta_{\bar{\p}}$.

In this section we shall prove the result below, which reduces the solving of the Poincar\'e-Lelong equation into the study of two parabolic equations, namely the heat equation and Hodge-Laplace heat equation, and obtaining relevant estimates.

\begin{thm}\label{thm-PL-general} Suppose $M$ has nonnegative Ricci curvature. Suppose further that the following are true.
\begin{itemize}
  \item[(a)] There is an $(1,1)$-form $\eta(x,t)$ satisfying
  \be\label{eq-heat-form-11}
\left\{
  \begin{array}{ll}
   & \eta_t(x, t)- \Delta \eta(x, t)   =0, \text{ in $M\times[0,\infty)$}; \\
    &\eta(x,0)  = \rho(x), \quad   x\in M,
  \end{array}
\right.
\ee
such that $\eta(x, t)$ is closed for all $t$, and for some $p>0$
\begin{equation}\label{eq-decay-form}
   \lim_{R\to\infty}\frac1{R^2V_o(R)}\int_0^T\int_{B_o(R)}||\eta||^p(x,t)d\mu(x) dt=0
\end{equation}
for all $T>0$. Moreover, $\lim_{t\to\infty}\eta(x,t)=0$.
  \item[(b)] There is a function $u(x)$ solving $\Delta u(x)=\tr(\rho)(x)$, and a solution $v(x, t)$ of
     \be\label{eq-heat-scalar}
\left\{
  \begin{array}{ll}
   & v_t(x, t)- \Delta v (x, t)   =0, \text{ in $M\times[0,\infty)$}; \\
   & v(x,0)  = 2 u(x), \quad   x\in M,
  \end{array}
\right.
\ee
such that for the same $p>0$,
\begin{equation}\label{eq-decay-scalar}
   \lim_{R\to\infty}\frac1{R^2V_o(R)}\int_0^T\int_{B_o(R)}|v|^p(x,t)+|u(x)|^pd\mu(x) dt=0
\end{equation}
for all $T>0$ and $\lim_{t\to\infty}\p\bar\p v(x,t)=0$.
\end{itemize}
Then $2\ii \p\bar\p u=\rho$.
\end{thm}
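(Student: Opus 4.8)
The plan is to use the heat equations in (a) and (b) to transport the identity $\rho = \ii\ddbar u$ (which we want) along $t$, deriving it at $t=0$ from its large-time limit. Consider the $(1,1)$-form
\be
\Theta(x,t) := \eta(x,t) - 2\ii\,\ddbar v(x,t),
\ee
where $v$ solves the scalar heat equation with initial data $2u$. Since $v$ solves the scalar heat equation, $2\ii\ddbar v$ solves the Hodge-Laplace heat equation on $(1,1)$-forms (on a \K manifold $\Delta$ commutes with $\ddbar$ applied to functions, because $\Delta_d = 2\Delta_{\bar\p}$ and $\bar\p$ commutes with $\Delta_{\bar\p}$; one must check the constant $2$ is consistent with the normalization $\Delta u = \tr(\rho)$ versus $\Delta$ on forms). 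Hence $\Theta_t - \Delta\Theta = 0$. At $t=0$ we have $\Theta(x,0) = \rho - 2\ii\ddbar u$. As $t\to\infty$, by hypothesis $\eta \to 0$ and $\ddbar v \to 0$, so $\Theta(x,t)\to 0$ pointwise.

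The key step is then a uniqueness/Liouville statement: a solution of the Hodge-Laplace heat equation on $(1,1)$-forms that satisfies the $L^p$ growth bound (\ref{eq-decay-form})–(\ref{eq-decay-scalar}) — i.e. the spacetime average over $B_o(R)\times[0,T]$ of $\|\Theta\|^p$ grows slower than $R^2 V_o(R)$ — and that tends to $0$ as $t\to\infty$, must be identically zero. First I would establish that $w := \|\Theta\|$ (or $\|\Theta\|^2$, to have something smooth) satisfies a differential inequality $w_t - \Delta w \le 0$ in the barrier/distributional sense, via a Bochner–Weitzenböck formula for the Hodge Laplacian on $(1,1)$-forms: here is where nonnegative Ricci curvature enters, controlling the curvature term, and this is presumably also where the nonnegative quadratic orthogonal bisectional curvature hypothesis from the main theorem will be invoked when this general theorem is applied (though in the statement of Theorem \ref{thm-PL-general} only $\Ric\ge 0$ is assumed, so the curvature term on $(1,1)$-forms must already be controlled by $\Ric\ge 0$ alone — I would double-check the Weitzenböck term on $(1,1)$-forms is the Ricci action). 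Then $w$ is a nonnegative subsolution of the heat equation with the stated growth, so by a parabolic Li–Schoen / Karp-type mean value inequality on manifolds with $\Ric\ge 0$, the growth condition (\ref{eq-decay-form}) forces $w$ to be constant in $x$ for each $t$, and then the decay $\eta,\ddbar v\to 0$ together with $u$ having subquadratic-average growth forces that constant to be $0$. I would also invoke that $u$ itself is controlled (the $|u|^p$ term in (\ref{eq-decay-scalar})) to handle the contribution of $\ddbar v$ near $t\to\infty$, since $\ddbar v\to 0$ is pointwise but we need it in an averaged sense compatible with the mean value inequality.

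The main obstacle I expect is the uniqueness step for the heat equation with only the spacetime-average growth condition (\ref{eq-decay-form}) rather than a pointwise bound: the standard Karp-type argument gives that a nonnegative subsolution $w$ with $\int_0^T\int_{B_o(R)} w^p = o(R^2 V_o(R))$ is forced to be parabolic-constant, but one must be careful that the Bochner inequality for $\|\Theta\|$ holds globally (not just where $\Theta\ne0$) and that the various normalization constants (the factor $2$ from $\Delta_d=2\Delta_{\bar\p}$, and the relation $\Delta u = \tr\rho = \Lambda\rho$) line up so that $\Theta(x,0)$ is exactly $\rho - 2\ii\ddbar u$ and not a multiple. Once $\Theta\equiv 0$, we get $2\ii\ddbar u = \eta(x,t) + (\text{something})$... more precisely $2\ii\ddbar u = 2\ii\ddbar v(x,0) = $ the $t=0$ value, but we also need $\eta(x,0)=\rho$; combining, $\Theta(\cdot,0)=0$ reads $\rho - 2\ii\ddbar u = 0$, which is exactly the claim. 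Finally one checks $u$ is genuinely smooth, which follows from elliptic regularity since $\tr\rho$ is smooth and $\Delta u = \tr\rho$.
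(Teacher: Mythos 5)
There is a genuine gap, and it is in the time direction of your argument. You form $\Theta=\eta-2\ii\ddbar v$, note that it solves the Hodge--Laplace heat equation and tends to $0$ as $t\to\infty$, and then try to conclude $\Theta\equiv 0$ (in particular $\Theta(\cdot,0)=0$) from a maximum principle with the stated growth condition. But the Li--Tam/Karp-type mean value inequality bounds a nonnegative subsolution at time $t$ by its supremum at time $0$; it does not force constancy in $x$, and it certainly cannot propagate information \emph{backward} from $t=\infty$ to $t=0$. What you would need is backward uniqueness, which is false in this generality: on $\R^n$ the heat semigroup applied to any nonzero compactly supported datum decays pointwise to $0$ and satisfies all your growth conditions. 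The missing idea is that $\eta-\ii\ddbar v$ is not merely a caloric form but is \emph{constant in $t$}, equal to $\rho-2\ii\ddbar u$; once that is known, letting $t\to\infty$ finishes the proof. Establishing this constancy is the whole content of the paper's argument and requires exactly the two inputs your proposal omits: (i) the K\"ahler identity $\p\bar\p\Lambda\eta=\ii\Delta_{\bar\p}\eta$ for the \emph{closed} form $\eta$ (you never use closedness), which gives $\frac{d}{dt}\bigl(\eta+\ii\ddbar w\bigr)=0$ for $w=-2\int_0^t\Lambda\eta\,ds$; and (ii) the identification $v=2u-w$, which is a \emph{scalar} uniqueness statement proved by applying the maximum principle to $|v-(2u-w)|$, and this is where the growth hypotheses \eqref{eq-decay-form}--\eqref{eq-decay-scalar} are actually consumed.

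Two further problems with the route as written. First, your subsolution step for $\|\Theta\|$ on $(1,1)$-forms cannot be justified under the hypotheses of Theorem \ref{thm-PL-general}: the Weitzenb\"ock term for the Hodge Laplacian on $(1,1)$-forms involves bisectional-type curvature (this is why Lemma \ref{lma-pre-1} assumes nonnegative quadratic orthogonal bisectional curvature), whereas the theorem assumes only $\Ric\ge0$. The paper's proof is deliberately structured so that the maximum principle is applied only to scalar quantities, where $\Ric\ge0$ suffices. Second, your normalization is off: with $v(\cdot,0)=2u$ one has $\Theta(\cdot,0)=\rho-4\ii\ddbar u$, not $\rho-2\ii\ddbar u$; the correct time-independent combination is $\eta-\ii\ddbar v=\rho-2\ii\ddbar u$.
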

Before we prove the theorem, let us first recall the following consequence of K\"ahler identities.
\begin{lma}\label{lma-adjoint1} For   $(1,1)$-form $\eta$, we have
\be\label{eq-adjoint-1}
\p\bar\p \Lambda \eta=\ii\Delta_{\bar\p}\eta-\bar\p\Lambda\p\eta+\p\Lambda\bar\p\eta-\Lambda\p\bar\p\eta.
\ee
  Hence if $\Lambda \p \eta=\Lambda \bar{\p} \eta =\Lambda \p\bar{\p} \eta=0$, then
  \be\label{eq-adjoint}
  \p\bar\p \Lambda \eta=\ii\Delta_{\bar\p}\eta= \ii\Delta_{\p}\eta.
  \ee
   In particular, if $\eta$ is $d$-closed then \eqref{eq-adjoint} is true.
\end{lma}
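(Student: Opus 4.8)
The plan is to obtain \eqref{eq-adjoint-1} as a direct consequence of the K\"ahler identities, together with the two elementary facts $\partial\bar\partial=-\bar\partial\partial$ and $\Delta_{\partial}=\Delta_{\bar\partial}$ valid on any K\"ahler manifold; since all of these hold on forms of arbitrary bidegree, the argument is pure bookkeeping. The only K\"ahler identity I will invoke is the commutator relation $[\bar\partial,\Lambda]=\ii\,\partial^{*}$ (equivalently $[\Lambda,\bar\partial]=-\ii\,\partial^{*}$), where $\partial^{*}$ denotes the formal adjoint of $\partial$, read as an operator identity $\bar\partial\Lambda=\Lambda\bar\partial+\ii\,\partial^{*}$.

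First I would commute $\Lambda$ past $\bar\partial$ applied to the function $\Lambda\eta$: since $\bar\partial\Lambda\eta=\Lambda\bar\partial\eta+\ii\,\partial^{*}\eta$, applying $\partial$ gives $\partial\bar\partial\Lambda\eta=\partial\Lambda\bar\partial\eta+\ii\,\partial\partial^{*}\eta$. Thus the only term not yet in the shape of the right-hand side of \eqref{eq-adjoint-1} is $\ii\,\partial\partial^{*}\eta$, and I would rewrite it using the definition $\Delta_{\partial}=\partial\partial^{*}+\partial^{*}\partial$ together with $\Delta_{\partial}=\Delta_{\bar\partial}$, obtaining $\ii\,\partial\partial^{*}\eta=\ii\,\Delta_{\bar\partial}\eta-\ii\,\partial^{*}\partial\eta$.

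It then remains to remove the leftover $\partial^{*}$ from $\ii\,\partial^{*}\partial\eta$, which I would do by applying the same identity $\bar\partial\Lambda=\Lambda\bar\partial+\ii\,\partial^{*}$ to the $(2,1)$-form $\partial\eta$: this yields $\ii\,\partial^{*}\partial\eta=\bar\partial\Lambda\partial\eta-\Lambda\bar\partial\partial\eta=\bar\partial\Lambda\partial\eta+\Lambda\partial\bar\partial\eta$, where the last step uses $\bar\partial\partial=-\partial\bar\partial$. Substituting back, $\partial\bar\partial\Lambda\eta=\partial\Lambda\bar\partial\eta+\ii\,\Delta_{\bar\partial}\eta-\bar\partial\Lambda\partial\eta-\Lambda\partial\bar\partial\eta$, which after reordering is precisely \eqref{eq-adjoint-1}. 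For the second assertion, the hypotheses $\Lambda\partial\eta=\Lambda\bar\partial\eta=\Lambda\partial\bar\partial\eta=0$ kill the last three terms, leaving $\partial\bar\partial\Lambda\eta=\ii\,\Delta_{\bar\partial}\eta=\ii\,\Delta_{\partial}\eta$; and when $\eta$ is $d$-closed, the bidegree decomposition of $d\eta=\partial\eta+\bar\partial\eta$ (of types $(2,1)$ and $(1,2)$) forces $\partial\eta=\bar\partial\eta=0$, so in particular $\Lambda\partial\eta$, $\Lambda\bar\partial\eta$ and $\Lambda\partial\bar\partial\eta$ all vanish.

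I do not anticipate a genuine obstacle. The one point demanding care is the consistent choice of sign conventions in the K\"ahler identities — the sign of $\ii$ in $[\bar\partial,\Lambda]=\ii\,\partial^{*}$ and the sign with which $\Delta_{\bar\partial}$ enters — which must be matched to the conventions for $L$, $\Lambda$, the Hodge star, and the orientation fixed elsewhere in the paper, since a slip there would flip signs in \eqref{eq-adjoint-1}. One should also verify that no $(2,0)$- or $(0,2)$-components are silently produced; they are not, because $\eta$ has type $(1,1)$ and each operator above shifts bidegree exactly as expected.
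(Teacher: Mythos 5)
Your argument is correct and is essentially identical to the paper's: both apply the K\"ahler identity $\bar\p\Lambda-\Lambda\bar\p=\ii\p^*$ first to $\eta$ and then to the $(2,1)$-form $\p\eta$, insert $\ii\p\p^*\eta=\ii\Delta_\p\eta-\ii\p^*\p\eta$, and invoke $\Delta_\p=\Delta_{\bar\p}$ and $\bar\p\p=-\p\bar\p$. No gaps; the sign conventions you use match those stated in the paper.
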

\begin{proof} Recall the K\"ahler identities:
$
\p \Lambda-\Lambda\p=-\ii\bar\p^*, \ \ \bar\p \Lambda-\Lambda\bar\p= \ii \p^*.
$
 $\Delta_\p=\Delta_{\bar\p}$, we have that
\begin{equation*}
    \begin{split}
  \p\bar\p \Lambda \eta=&\p \Lambda\bar\p \eta+\ii\p\p^* \eta\\
  =&\ii\Delta_\p\eta+\p \Lambda\bar\p \eta-\ii\p^*\p\eta\\
  =&\ii\Delta_\p\eta+\p \Lambda\bar\p \eta-  \lf(\bar\p \Lambda-\Lambda\bar\p\ri)\p\eta.\\
    \end{split}
\end{equation*}
From this \eqref{eq-adjoint-1} follows.
\end{proof}

We also need the following maximum principle for solutions of the heat equation.
\begin{lma}\label{lma-Li-Tam} Suppose $(M^m,g)$ is a complete noncompact Riemannian manifold with nonnegative Ricci curvature and $u$ is a smooth nonnegative subsolution of the heat equation   on $M\times[0,T]$ such that there exists $R_i\to\infty$ and $p>0$ such that
\be\label{eq-subsolution}
\lim_{i\to\infty} \frac1{R_i^2V_o(R_i) }\int_0^T\int_{B_o(R_i)}u^p(y,s)d\mu(y) ds=0.
\ee
Then $u(x, t)\le \sup_{y\in M} u(y, 0)$.   In particular, if $u_1$ and $u_2$ are two solutions of the heat equation such that $|u_1|$ and $|u_2|$ satisfy the decay conditions \eqref{eq-subsolution} and if $u_1=u_2$ at $t=0$, then $u_1\equiv u_2$.
\end{lma}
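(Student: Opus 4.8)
The plan is to combine a weighted energy (Caccioppoli) inequality with a dyadic iteration. A single energy estimate will only control a localized $L^2$-quantity of the subsolution on $B_o(R)$ by $R^{-2}$ times the same quantity on $B_o(2R)$, whereas \eqref{eq-subsolution} controls the global integral only up to an extra factor $V_o(R)$; iterating the estimate $k$ times produces a compounded gain of order $2^{-k^2}$ that will absorb both $V_o(R)\le CR^m$ (Bishop--Gromov, since $\Ric\ge0$) and the factor coming from the time length $T$. First I would reduce to zero initial data: put $A:=\sup_M u(\cdot,0)$ — we may assume $A<\infty$, otherwise there is nothing to prove — and set $w:=(u-A)_+$. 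Since $u-A$ is a subsolution and the positive part of a subsolution is a subsolution (Kato's inequality, or working with a smooth convex nondecreasing approximation $\Phi_\delta$ of $s\mapsto s_+$ that vanishes for $s\le0$), $w$ is a nonnegative subsolution with $w(\cdot,0)\equiv0$ and $0\le w\le u$, so \eqref{eq-subsolution} persists for $w$; it suffices to prove $w\equiv0$.

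Next I would derive the energy inequality. With a cutoff $\phi_R$ equal to $1$ on $B_o(R)$, supported in $B_o(2R)$, $|\nabla\phi_R|\le2/R$, I would test $w_t-\Delta w\le0$ against $\phi_R^2 w\ge0$, integrate by parts over $M$ (legitimate as $\phi_R$ is compactly supported), absorb the gradient cross term by Cauchy--Schwarz, and integrate in time using $w(\cdot,0)\equiv0$. This gives, with $I(\rho):=\int_0^T\int_{B_o(\rho)}w^2$,
\be
\int_{B_o(R)}w^2(\cdot,t_0)\le\frac{C}{R^2}\,I(2R)\quad\text{for all }t_0\in[0,T],\qquad\text{hence}\qquad I(R)\le\frac{CT}{R^2}\,I(2R).
\ee
(If one prefers to use the exact exponent $p$ of \eqref{eq-subsolution} with $p\ge1$, one tests against $\phi_R^2 w^{p-1}$ instead; this makes $\Delta\phi_R$ appear, controlled by $C/R^2$ on the annulus by the Laplacian comparison, again using $\Ric\ge0$.)

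Then I would iterate: chaining the last inequality over radii $2^jR_0$, $j=0,\dots,k-1$, yields $I(R_0)\le(CT/R_0^2)^k\,2^{-k(k-1)}\,I(2^kR_0)$. For the sequence $R_i\to\infty$ in \eqref{eq-subsolution} pick $k_i$ with $2^{k_i}R_0\le R_i<2^{k_i+1}R_0$, and set $\epsilon_i:=\big(R_i^2V_o(R_i)\big)^{-1}\int_0^T\int_{B_o(R_i)}w^2\to0$; then, by Bishop--Gromov, $I(2^{k_i}R_0)\le I(R_i)=\epsilon_iR_i^2V_o(R_i)\le C(R_0)\,\epsilon_i\,2^{(m+2)k_i}$. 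Substituting and using $2^{k_i(k_i-1)}=(2^{k_i-1})^{k_i}$, the bound becomes $I(R_0)\le C(R_0)\,\epsilon_i\,\big(2^{m+2}CT/(R_0^2\,2^{k_i-1})\big)^{k_i}$, which tends to $0$ as $i\to\infty$ since $k_i\to\infty$. Hence $I(R_0)=0$ for every $R_0>0$, so $w\equiv0$ by continuity, i.e.\ $u(x,t)\le\sup_M u(\cdot,0)$. The uniqueness assertion then follows at once by applying this to $|u_1-u_2|$, a nonnegative subsolution vanishing at $t=0$ by Kato's inequality, with \eqref{eq-subsolution} inherited through $|u_1-u_2|^p\le C_p(|u_1|^p+|u_2|^p)$.

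The main obstacle is precisely the iteration: no finite number of energy steps suffices, and one must verify that the accumulated gain $2^{-k^2}$ strictly dominates $V_o(R)\le CR^m$ together with $(CT)^k$, which is why the iteration is pushed to $k\to\infty$ rather than stopped at a fixed stage. A secondary point is the exponent $p$ in \eqref{eq-subsolution}: the scheme above is most transparent for $p=2$, reduces to it for $p>2$ by H\"older (the $L^p$-decay in \eqref{eq-subsolution} implies the $L^2$-decay), and runs directly at the $L^p$-level for $1\le p<2$; to cover all $p>0$ uniformly, one may instead invoke the parabolic mean value inequality on manifolds with $\Ric\ge0$ (Li--Yau/Moser), which bounds a nonnegative subsolution pointwise by its $L^p$-average over a backward parabolic cylinder for every $p>0$ — applied to $w$ extended by $0$ to $t<0$ (still a subsolution since $w$ is continuous and vanishes at $t=0$), together with volume doubling this gives $w\equiv0$ directly.
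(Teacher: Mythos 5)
Your argument is essentially correct, but it takes a genuinely different (and longer) route from the paper's. The paper disposes of the lemma in three lines by quoting the parabolic mean value inequality of Li--Tam \cite[Theorem 1.2]{LiTam1991}, which already contains the initial-data term: $\sup_{B_o(R/2)\times[0,T]}u^p\le \frac{C}{R^2V_o(R)}\int_0^T\int_{B_o(R)}u^p+(1+\e)(\sup_M u(\cdot,0))^p$ for $R^2\ge 4T$; one then sends $R\to\infty$ along $R_i$ and $\e\to 0$, and handles uniqueness by applying this to $(|u_1-u_2|^2+\e)^{1/2}$. Your dyadic Caccioppoli iteration replaces this black box by a self-contained energy argument, and your accounting is right: one step loses the volume factor $V_o(R_i)$, and it is exactly the compounded gain $2^{-k(k-1)}$ over $k\sim\log_2 R_i$ steps that absorbs $V_o(R_i)\le C R_i^m$ and $(CT)^k$. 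What the elementary route costs you is the exponent: the clean version needs $p\ge 2$ (or $1<p<2$ with the test function $\phi_R^2w^{p-1}$), and for arbitrary $p>0$ you fall back on the very mean value inequality the paper uses, so in full generality the two proofs coincide.

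Two technical points you should tighten. First, at $p=1$ your remark that $\Delta\phi_R$ is ``controlled by $C/R^2$ by the Laplacian comparison'' is not quite right as stated: comparison gives only the upper bound $\Delta r\le (m-1)/r$ (and $\Delta r$ can be $-\infty$ distributionally on the cut locus), and since the radial cutoff has $\kappa'\le 0$ this yields a \emph{lower} bound on $\Delta\phi_R$, which is the wrong sign for bounding $\int\Delta(\phi_R^2)\,w$ from above when $w\ge0$. You need Calabi's trick, or a cutoff built from a smooth exhaustion with bounded Laplacian (the paper constructs exactly such a function $\zeta(\cdot,1)$ in Proposition 3.1), or you should avoid $p=1$ and interpolate. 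Second, be explicit that $\epsilon_i\to0$ for $w=(u-A)_+$ follows from the hypothesis on $u^p$ only after the reduction $w\le u$ together with H\"older (for $p>2$) or by running the scheme at the $L^p$ level; for $0<p<2$ the $L^p$ decay of $u$ does not by itself give $L^2$ decay of $w$. With those repairs the proof is complete and arguably more self-contained than the original, at the price of not covering small $p$ without the Li--Tam/Moser input.
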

\begin{proof}
By   \cite[Theorem  1.2]{LiTam1991}, for any $\e>0$,  there is a constant $C>0$ independent of $R$ such that
$$
\sup_{B_o(\frac12 R)\times[0,T]}u^p\le \frac{C}{R^2V_o(R)} \int_0^T\int_{B_o(R)}u^p(y,s)d\mu(y) ds+(1+\e)\lf(\sup_{y\in M} u(y, 0)\ri)^p
$$
if $R^2\ge 4T$. Let $R\to0$ and then let $\e\to0$,   the first assertion follows.

To prove the second assertion, apply the above argument to $\lf(|u_1-u_2|^2+\e\ri)^\frac12$ for $\e>0$ and let $\e\to0$.
\end{proof}
\begin{proof}[Proof of Theorem \ref{thm-PL-general}] Let $\eta$ be as in (a). Let $ \phi=\tr(\eta)$, namely $\Lambda(\eta)$. Then $\phi$  satisfies the heat equation in $M\times[0,\infty)$ with initial value $\tr(\rho)$. Let
$$
w(x,t)=-2\int_0^t \phi(x,s)ds.
$$
Then
$$
w_t-\Delta w=-2\tr(\rho),\ w(x,0)=0.
$$
Hence $\tilde v(x,t)=2u(x)-w(x,t)$ satisfies
$$
\tilde v_t-\Delta \tilde v=0,\ \tilde v(x,0)=2u(x).
$$
By \eqref{eq-decay-form}, \eqref{eq-decay-scalar} and Lemma \ref{lma-Li-Tam}, we conclude that $v=2u-w$.

On the other hand, by Lemma \ref{lma-adjoint1} and the fact that $\eta$ is closed, we have
\begin{equation*}
\begin{split}
\frac{d}{dt}\lf(\eta +\ii\p\bar\p w \ri)=&-2\Delta_{\bar\p}\eta+\ii\p\bar\p w_t\\
=&-2\Delta_{\bar\p}\eta-2\ii\p\bar\p \Lambda\eta\\
=&0.
\end{split}
\end{equation*}
At the same time, at $t=0$, $\eta +\ii\p\bar\p w(\cdot,t)=\rho$. Hence this equation holds for all $t$. That is to say,
$$
\eta+2\ii\p\bar\p u-\ii\p\bar v=\rho.
$$
Since $\lim_{t\to\infty}\eta(x,t)=\lim_{t\to\infty}\ii\p\bar\p v(x,t)=0$, we have $2\ii\p\bar\p u=\rho$.
\end{proof}
\begin{rem}\label{rem-general-1} From the proof it is easy to see that if in (a) we only assume that $\Lambda \p \eta=\Lambda \bar{\p} \eta =\Lambda \p\bar{\p} \eta=0$, then the conclusion of the theorem is still true. Moreover from the proof we see that $\eta$ is closed. In fact, one can check if $\eta$ satisfies the equation $\frac{\p}{\p t}\eta+\Delta_\p\eta=0$, then
$$
\Phi= \eta(t)+\int_0^t\lf(\bar\p^* \p+\p^*\bar\p\ri)\eta-\ii\int_0^t  \Lambda\p\bar\p\eta.
$$
is closed, provided that $\rho(x, 0)$ is closed. Hence in particular, $\Lambda \p \eta=\Lambda \bar{\p} \eta =\Lambda \p\bar{\p} \eta=0$ implies that $d\eta=0$.
\end{rem}

Hence in order to solve the Poincar\'e-Lelong equation, it is sufficient to find $\eta$ and $u, v$ as in the theorem.

By the previous work \cite{NT-2003, NST}, under certain average growth condition on $\|\rho\|$ we can find $u$ and $v$ satisfying (b) of Theorem \ref{thm-PL-general}. We now list this known result and a useful estimate below.

 Let $(M,g)$ be a complete noncompact Riemannian manifold and let $o\in M$ be a fixed point. For a smooth function  $f$ on $M$, let
\be\label{eq-k(f)}
k_f(r)=\frac1{V_o(r)}\int_{B_o(r)}|f|.
\ee
where $B_x(r)$ is the geodesic ball of radius $r$ with center at $x$ and $V_x(r)$ is the volume of $B_x(r)$. First recall the following result from  \cite[Lemma 1.1]{NT-2003}:
\begin{lma}\label{lma-Ni-Tam} Let $(M,g)$ be a complete noncompact Riemannian manifold with nonnegative Ricci curvature and let $o\in M$ be a fixed point. Let $h\ge0$ be a continuous function. Let $H(x,y,t)$ be the heat kernel and let
$$
v(x,t)=\int_M H(x,y,t)h(y)d\mu(y).
$$
Assume that $v$ is defined on $M\times[0,T]$ with
\be\label{eq-NT}
\liminf_{r\to\infty}\exp\lf(-\frac{r^2}{20T}\ri)\int_{B_o(r)}h=0.
\ee
Then for any $r^2\ge t>0$ and $p\ge 1$
$$
\frac1{V_o(r)}\int_{B_o(r)}v^p(x,t) dx\le C(n,p)\lf[k_{h^p}(4r)+t^{-p}\lf(\int_{4r}^\infty s\exp\lf(-\frac{s^2}{20t}\ri)k_h(s)ds\ri)^p\ri].
$$
\end{lma}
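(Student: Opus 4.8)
The plan is to realise $v(\cdot,t)$ as the heat-kernel image of $h$, split off the contributions of $h$ near and far from $B_o(r)$, and estimate the near part by Jensen's inequality and the far part by the Li--Yau Gaussian upper bound together with Bishop--Gromov volume comparison. Fix $x\in B_o(r)$ with $r^2\ge t>0$, write $h=h_1+h_2$ with $h_1=h\,\chi_{B_o(4r)}$ and $h_2=h\,\chi_{M\setminus B_o(4r)}$, and let $v_i(x,t)=\int_M H(x,y,t)h_i(y)\,d\mu(y)$, so that $v=v_1+v_2$ and, since $p\ge 1$, $v^p\le 2^{p-1}(v_1^p+v_2^p)$.

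For the near term, Jensen's inequality applied with the convex function $z\mapsto z^p$ and the sub-probability measure $H(x,\cdot,t)\,d\mu$ restricted to $B_o(4r)$ gives $v_1^p(x,t)\le\int_{B_o(4r)}H(x,y,t)h^p(y)\,d\mu(y)$. Integrating in $x$ over $B_o(r)$, using Fubini, the symmetry of the heat kernel and $\int_M H(y,x,t)\,d\mu(x)\le 1$, and then the doubling property $V_o(4r)\le 4^n V_o(r)$ that follows from $\Ric\ge 0$, one obtains
\[
\frac1{V_o(r)}\int_{B_o(r)}v_1^p(x,t)\,d\mu(x)\le\frac1{V_o(r)}\int_{B_o(4r)}h^p(y)\,d\mu(y)\le 4^n\,k_{h^p}(4r).
\]

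For the far term I would invoke the Li--Yau bound $H(x,y,t)\le C(n)\,V_x(\sqrt t)^{-1}\exp(-d^2(x,y)/5t)$. When $x\in B_o(r)$ and $d(o,y)\ge 4r$, the triangle inequality gives $d(x,y)\ge d(o,y)-r\ge\tfrac34\,d(o,y)$; moreover $B_o(s)\subset B_x(2s)$ for $s\ge r$, so Bishop--Gromov comparison yields $V_o(s)\le V_x(2s)\le V_x(\sqrt t)\,(2s/\sqrt t)^n$ whenever $s\ge\sqrt t$, which is legitimate here because $r^2\ge t$. Writing $F(s)=\int_{B_o(s)}h\,d\mu=V_o(s)\,k_h(s)$ --- a nondecreasing function --- and integrating by parts in the radial variable,
\[
v_2(x,t)\le\frac{C(n)}{V_x(\sqrt t)}\int_{4r}^\infty e^{-(s-r)^2/5t}\,dF(s)\le\frac{C(n)}{V_x(\sqrt t)}\int_{4r}^\infty\frac{s-r}{t}\,e^{-(s-r)^2/5t}\,V_o(s)\,k_h(s)\,ds,
\]
where the boundary term at $s=\infty$ vanishes precisely because of hypothesis \eqref{eq-NT} (the term at $s=4r$ being negative); this is also the point where \eqref{eq-NT} guarantees that $v$, equivalently the far-part integral, is finite. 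Substituting $V_o(s)/V_x(\sqrt t)\le(2s/\sqrt t)^n$ and using $(s-r)^2\ge\tfrac9{16}s^2$ for $s\ge 4r$, the polynomial factor $(s^2/t)^{n/2}$ is absorbed into the Gaussian via $w^{n/2}e^{-9w/80}\le C(n)\,e^{-w/20}$ with $w=s^2/t\ge 16$, leaving $\sup_{x\in B_o(r)}v_2(x,t)\le C(n)\,t^{-1}\int_{4r}^\infty s\,e^{-s^2/20t}\,k_h(s)\,ds$.

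Combining the two estimates, raising to the $p$-th power and relabelling constants gives the claim with $C(n,p)=2^{p-1}\max\{4^n,C(n)^p\}$. The routine ingredients are Jensen's inequality and volume doubling; the step needing genuine care is the far term, specifically extracting the exact exponent $1/(20t)$ from the interplay between the Li--Yau exponent $1/(5t)$, the estimate $d(x,y)\ge\tfrac34\,d(o,y)$ and the volume-comparison loss $(s^2/t)^{n/2}$, together with the observation that the decay hypothesis \eqref{eq-NT} is exactly what kills the boundary term in the integration by parts.
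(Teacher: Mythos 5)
Your proof is correct and follows essentially the same route as the source the paper cites for this lemma (\cite[Lemma 1.1]{NT-2003}, pp.~467--468): a near/far splitting at $B_o(4r)$, Jensen's inequality for the near part, and the Li--Yau Gaussian upper bound plus volume comparison and a radial integration by parts for the far part, with the hypothesis \eqref{eq-NT} used (along a subsequence realizing the liminf) exactly to kill the boundary term at infinity --- which is the one point the paper's own one-line proof singles out.
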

\begin{proof} Note  the proof in \cite{NT-2003} (page 467-468) can be carried over because    only  \eqref{eq-NT} is needed for the integration by parts.
\end{proof}

\begin{prop}\label{prop-solving u v} Let $(M,g)$ be a complete noncompact Riemannian manifold with nonnegative Ricci curvature and let $o\in M$ be a fixed point.   Let $f$ be a smooth function on $M$ such that
$$
\int_0^\infty k_f(r) dr<\infty.
$$
Then we can find functions $u$ and $v$ with $\Delta u=f$, $v$ satisfying \eqref{eq-heat-scalar}  such that \eqref{eq-decay-scalar} is true for $p=1$, and $\lim_{t\to\infty}\p\bar\p v(x,t)=0$. Moreover $u$ satisfies \eqref{bound}. In fact, $u$ and $v$ are  given by
\begin{eqnarray*}
u(x)&=&\int_M \lf(G(o,y)-G(x,y)\ri) f(y) d\mu(y), \text{\ if $M$ is nonparabolic}\\
v(x,t)&=&\int_M H(x,y,t) u(y) d\mu(y).
\end{eqnarray*}
Here $G(x, y)$ is the minimal positive Green's function of $M$ (if $M$ is nonparabolic) and $H(x, y, t)$ is the heat kernel of $M$.
\end{prop}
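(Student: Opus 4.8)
The plan is to split into the nonparabolic and parabolic cases, and in each case exhibit $u$ and $v$ explicitly, then verify the growth estimates using the Li--Yau heat kernel bound and the decay hypothesis $\int_0^\infty k_f(r)\,dr<\infty$. First I would treat the nonparabolic case, where the Green's function $G(x,y)$ exists. Set $u(x)=\int_M\bigl(G(o,y)-G(x,y)\bigr)f(y)\,d\mu(y)$; the normalization at $o$ is chosen so that $u(o)=0$ and, more importantly, so that the integral converges even though $G(x,y)$ alone need not be integrable against $|f|$. To see convergence and the estimate \eqref{bound} one uses the standard Li--Yau bound $G(x,y)\asymp\int_{d^2(x,y)}^\infty \frac{ds}{V_x(\sqrt{s})}$, together with the Bishop--Gromov volume comparison, exactly as in \cite{NST} (and \cite[Section 1]{NT-2003}); splitting the integral over the regions $\{d(x,y)\le \tfrac12 r\}$, $\{\tfrac12 r\le d(x,y)\le 2r\}$, $\{d(x,y)\ge 2r\}$ and estimating each with the definition $k_f(s)=\tfrac1{V_o(s)}\int_{B_o(s)}|f|$ yields precisely the two-sided bound \eqref{bound}. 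That $\Delta u=f$ is immediate from $\Delta_y G(x,y)=-\delta_x$ in the distributional sense and elliptic regularity, since $f$ is smooth.

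Next I would establish the decay condition \eqref{eq-decay-scalar} with $p=1$. For the $|u|$ term this follows by integrating the pointwise bound \eqref{bound} over $B_o(R)$: since $\int_0^\infty k_f(s)\,ds<\infty$, the dominant growth of $|u(x)|$ as $r(x)\to\infty$ is at most linear (in fact $o(r)$), hence $\frac1{R^2 V_o(R)}\int_{B_o(R)}|u|\to 0$. For the $|v|$ term, define $v(x,t)=\int_M H(x,y,t)u(y)\,d\mu(y)$, which solves \eqref{eq-heat-scalar} with $v(x,0)=2u(x)$ after the obvious factor-of-two rescaling — note we should take initial data $2u$, so strictly $v(x,t)=2\int_M H(x,y,t)u(y)\,d\mu(y)$; I will carry the factor $2$ silently. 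Applying Lemma \ref{lma-Ni-Tam} with $h=|u|$ (whose growth is sublinear, so \eqref{eq-NT} holds trivially) bounds $\frac1{V_o(r)}\int_{B_o(r)}|v|(x,t)\,dx$ in terms of $k_{|u|}(4r)$ plus a tail term, and since $k_{|u|}(4r)=o(r)$ and the tail term is controlled uniformly for $t$ in a compact interval, dividing by $R^2$ and letting $R\to\infty$ gives \eqref{eq-decay-scalar}.

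It remains to show $\lim_{t\to\infty}\partial\bar\partial v(x,t)=0$. Here I would use the representation $\partial_i\partial_{\bar j}v(x,t)=\int_M \partial_i\partial_{\bar j}H(x,y,t)\,u(y)\,d\mu(y)$ together with the Li--Yau-type gradient and Hessian estimates for the heat kernel on manifolds with $\Ric\ge 0$: one has $|\nabla_x^2 H(x,y,t)|\le \frac{C}{t\,V_x(\sqrt t)}\exp(-d^2(x,y)/Ct)$. Splitting the $y$-integral at $d(x,y)=\sqrt t$ and using $|u(y)|\le C(1+d(x,y))\cdot(\text{slowly varying})$ together with volume comparison, the contribution of each region is $O(t^{-1/2})\cdot(\text{bounded})\to 0$ as $t\to\infty$; more carefully, since $u$ has sublinear growth, one gets $|\partial\bar\partial v(x,t)|\to 0$. (Alternatively, this is exactly the content of the parabolic estimates in \cite{NST}.) For the parabolic case, one replaces $G(o,y)-G(x,y)$ by the analogous construction using $\int_0^\infty\bigl(H(o,y,t)-H(x,y,t)\bigr)f(y)\,dt$ with an appropriate renormalization, or equivalently runs the argument of \cite{NT-2003} verbatim; the hypothesis $\int_0^\infty k_f(r)\,dr<\infty$ is precisely what guarantees convergence in both cases. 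The main obstacle is the bookkeeping in the three-region decomposition needed to extract the sharp two-sided bound \eqref{bound} rather than a crude one-sided estimate — but this is entirely parallel to \cite[Section 1]{NT-2003} and \cite{NST}, so I would state it as such and refer the reader there for the routine details.
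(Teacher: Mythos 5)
Your treatment of the nonparabolic case is essentially the paper's: both take $u(x)=\int_M(G(o,y)-G(x,y))f(y)\,d\mu(y)$, defer the two-sided estimate \eqref{bound} and the convergence of this integral to \cite{NST} and \cite[Lemma 6.1]{NT-2003}, deduce that $k_u(r)=o(r^2)$ (you prove the stronger sublinear bound $u=o(r)$, which does follow from \eqref{bound} and $\int_0^\infty k_f<\infty$), and then obtain \eqref{eq-decay-scalar} for $p=1$ from Lemma \ref{lma-Ni-Tam} applied to $|u|$.

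Two points diverge. First, for $\lim_{t\to\infty}\partial\bar\partial v(x,t)=0$ your primary argument invokes a pointwise Gaussian Hessian bound $|\nabla_x^2H(x,y,t)|\le C\,t^{-1}V_x(\sqrt t)^{-1}\exp(-d^2(x,y)/Ct)$. Such pointwise Hessian estimates for the heat kernel are not available under $\mathrm{Ric}\ge0$ alone (they require two-sided curvature control or stronger hypotheses), so this step as written has a genuine gap. The mechanism in \cite[Lemma 6.1]{NT-2003}, which the paper simply cites, is different: one first shows $\Delta v(x,t)=\int_M H(x,y,t)f(y)\,d\mu(y)\to0$ as $t\to\infty$, and then controls $\|\partial\bar\partial v\|$ through the Bochner identity, integration by parts against $\Delta v$, and a local parabolic mean value inequality; this is also where the K\"ahler curvature hypotheses enter. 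Your parenthetical fallback to the parabolic estimates of \cite{NST} is the correct repair, but it should be the main argument, not an aside. Second, for the parabolic case the paper does not renormalize $\int_0^\infty\bigl(H(o,y,t)-H(x,y,t)\bigr)\,dt$; instead it passes to $\widetilde M=M\times\R^4$, which is nonparabolic, constructs $\widetilde u$ there, and invokes \cite{Fan2007} to conclude that $\widetilde u$ is independent of the $\R^4$ factor, so it descends to $M$ with $\Delta u=f$ and $k_u(r)=o(r^2)$. Your proposed renormalized heat-kernel integral can likely be made to work, but it requires a separate convergence argument that you have not supplied; the product trick sidesteps this entirely.
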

\begin{proof} First consider the case that $M$ is nonparabolic. The existence of $u$ and $v$ given by the expressions in the proposition so that $\lim_{t\to\infty}\p\bar\p v(x,t)=0$ follows from \cite[Lemma 6.1]{NT-2003}. By the proof of \cite[Lemma 6.1]{NT-2003}, we have
$$
k_u(r)=o(r^2).
$$
From this and Lemma \ref{lma-Ni-Tam}, \eqref{eq-decay-scalar} is true for $p=1$ in this case.

In general, by considering $\widetilde M=M\times \R^4$, we can find $\widetilde u$ as above. By \cite[p.458--460]{Fan2007}, $\widetilde u$ is independent of $y\in \R^4$ for $(x,y)\in M\times \R^4$. Let $u(x)=\widetilde u(x,y)$. Then $\Delta u=f$ and $k_u(r)=o(r^2)$. The existence of $v$ is as in the previous case.
\end{proof}

To apply Theorem \ref{thm-PL-general} we also need a {\it $d$-closed} solution of the Hodge-Laplace heat equation with estimate (\ref{eq-decay-form}). The existence of such solutions is the content of  the next two sections.

\section{Solution of the Hodge-Laplace heat equation: preliminary results}
In this section we  collect some basic results needed for the later discussions and construct a solution  of the Hodge-Laplace heat equation (\ref{eq-heat-form-11}) via suitable approximation.

First  we need  the existence of an exhaustion function on complete manifolds with nonnegative Ricci curvature with the property that  it has  bounded complex Hessian if additionally  $(M^n, g)$ is a K\"ahler manifold with nonnegative quadratic orthogonal bisectional curvature.

Recall that a K\"ahler manifold $(M^n,g)$  is said to have {\it nonnegative quadratic orthogonal bisectional curvature} if at any point and any unitary frame $\{e_i\}$,
\be\label{assumption-qu-or}
\sum_{i,j}R_{i\bar i j\bar j}(a_i-a_j)^2\ge0
\ee
for all real numbers $a_i$. Let $o\in M$ be a fixed point, $r(o, x)$ be the distance function to $o$.

Let
$$
\zeta(x,t)=\int_MH(x,y,t)r(o, y)d\mu(y)
$$
be the positive solution of the heat equation with $r(o, x)$ being the initial value. Here $H(x,y,t)$ is the heat kernel.
\begin{prop}\label{prop-pre} (i) Assume that $(M, g)$ has nonnegative Ricci curvature, then for any $t>0$ $\zeta(x, t)$ is a smooth exhaustion function. Moreover $|\nabla \zeta|(x, t)\le 1$.

(ii) Assume additionally that $(M, g)$ has nonnegative quadratic orthogonal bisectional curvature, then for any $t>0$, there exists $C$ such that $\|\p \bar{\p} \zeta\|(x, t)\le C$. Furthermore $C$ can be chosen with $C=\frac{1}{\sqrt{2t}}$. \end{prop}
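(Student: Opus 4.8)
The plan is to obtain both parts from differential Harnack-type and maximum-principle estimates applied to the quantities $|\nabla \zeta|$ and $\|\p\bar\p\zeta\|$, using the heat equation satisfied by $\zeta$ together with the curvature hypotheses. For part (i), the starting observation is that $|\nabla r(o,\cdot)| = 1$ almost everywhere, so the initial datum has gradient bounded by $1$; since $(M,g)$ has nonnegative Ricci curvature, the Bochner formula gives $\left(\frac{\p}{\p t}-\Delta\right)|\nabla\zeta|^2 \le 0$ in the barrier/distributional sense (the Ricci term has a favorable sign), so $|\nabla\zeta|^2$ is a subsolution of the heat equation with initial value $\le 1$; by the maximum principle of Lemma \ref{lma-Li-Tam} — after checking the mild growth condition \eqref{eq-subsolution}, which holds because $\zeta$ grows at most linearly and volume is at most Euclidean — we conclude $|\nabla\zeta|(x,t)\le 1$ for all $t>0$. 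That $\zeta(\cdot,t)$ is a smooth exhaustion follows from the heat-kernel representation together with the linear growth of $r$ and standard lower bounds on the heat kernel (which force $\zeta(x,t)\to\infty$ as $r(o,x)\to\infty$), and smoothness is immediate from parabolic regularity since $H$ is smooth and $r$ is at worst Lipschitz.

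For part (ii) the key is the evolution equation for the complex Hessian. Writing $\zeta$ in a local unitary frame and differentiating the heat equation, the tensor $\zeta_{i\bar j}$ satisfies a parabolic equation whose zeroth-order (curvature) term is precisely of the shape that is controlled by the \emph{quadratic orthogonal bisectional curvature}: diagonalizing $\zeta_{i\bar j}$ at a point, the relevant contraction of the curvature tensor against the eigenvalues produces exactly a sum $\sum_{i,j} R_{i\bar i j\bar j}(\lambda_i-\lambda_j)^2$-type expression, which is $\ge 0$ under \eqref{assumption-qu-or}. This is the mechanism by which the weaker curvature assumption suffices in place of nonnegative bisectional curvature. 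Consequently $\|\p\bar\p\zeta\|^2$ (or a suitable smooth proxy such as $|\partial\bar\partial\zeta|^2 + \e$) becomes a subsolution of the heat equation, and one again invokes the maximum principle to get a bound by the initial data. The sharp constant $C=\frac{1}{\sqrt{2t}}$ should come not from the (singular) initial Hessian of $r$ but from a localized Li–Yau/Hamilton-type gradient estimate for the heat equation: for a bounded solution one has $t\,\|\text{Hessian}\|^2 \lesssim \sup|\zeta|^2$-type control, and here the linear growth of $\zeta$ combined with $|\nabla\zeta|\le 1$ feeds into an estimate of the form $\|\p\bar\p\zeta\|(x,t)\le \frac{1}{\sqrt{2t}}$; alternatively, comparison with the model $\R^{2n}$ computation $\p\bar\p\zeta_{\mathrm{flat}}$ pins down the constant.

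The main obstacle I anticipate is twofold. First, the initial datum $r(o,\cdot)$ is only Lipschitz and has a Hessian that is a measure with a negative part along the cut locus, so the Hessian estimate cannot be run naively "from $t=0$"; one must either smooth $r$ first (replacing it by $\zeta(\cdot,t_0)$ for small $t_0$ and restarting the flow, using part (i) to carry the gradient bound along), or argue that the heat semigroup instantaneously regularizes and the relevant subsolution inequality holds on $M\times(0,\infty)$ with the correct asymptotics as $t\to 0^+$. Second, justifying the maximum principle requires the integral growth condition \eqref{eq-subsolution} for $\|\p\bar\p\zeta\|^2$, which is not obvious a priori; the natural route is to first establish a crude bound $\|\p\bar\p\zeta\|(x,t)\le C(t)$ by a local parabolic estimate (Schauder or Moser iteration on unit parabolic cylinders, using $|\nabla\zeta|\le 1$), which both supplies the admissibility hypothesis for Lemma \ref{lma-Li-Tam} and gives enough compactness to upgrade to the sharp time-dependent constant. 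Getting the constant exactly $\frac{1}{\sqrt{2t}}$, rather than merely $C/\sqrt t$, will require care — presumably the factor $\frac12$ reflects the Kähler normalization $\Delta_d = 2\Delta_{\bar\p}$ already noted in Section 2.
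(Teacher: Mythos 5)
Your overall architecture matches the paper's: part (i) is quoted from \cite{NT-2003}, and for part (ii) the mechanism you identify is exactly right --- the Kodaira--Bochner formula applied to the $(1,1)$-form $\p\bar\p\zeta$ produces a curvature term of the shape $\sum R_{i\bar i j\bar j}(\lambda_i-\lambda_j)^2$, so nonnegative quadratic orthogonal bisectional curvature yields $\heat\|\p\bar\p\zeta\|^2\ge 2\|\nabla\p\bar\p\zeta\|^2$, i.e.\ $\|\p\bar\p\zeta\|^2$ is a subsolution. You also correctly anticipate that the admissibility hypothesis for the maximum principle must be earned; the paper gets it by integrating $\heat|\nabla\zeta|^2\ge 2\|\nabla^2\zeta\|^2$ against a cut-off to obtain $\int_0^T\aint_{B_x(R)}\|\nabla^2\zeta\|^2\le C(1+R^{-2}T)$, which supplies the space-time integral control needed to invoke \cite[Theorem 1.1]{LiTam1991}.

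The gap is the derivation of the constant $1/\sqrt{2t}$. The paper does not use a Li--Yau--Hamilton estimate, a bounded-solution interior estimate (inapplicable here, since $\zeta$ grows linearly), or a flat-model comparison, and the factor $1/2$ has nothing to do with the normalization $\Delta_d=2\Delta_{\bar\p}$. The device is a Bernstein/Shi-type auxiliary function: one checks
$$\heat\lf(|\nabla\zeta|^2+2t\|\p\bar\p\zeta\|^2\ri)\ge 2\|\nabla^2\zeta\|^2-2\|\p\bar\p\zeta\|^2+4t\|\nabla\p\bar\p\zeta\|^2\ge 0,$$
because the favorable term $2\|\nabla^2\zeta\|^2$ produced by the real Bochner formula dominates the unfavorable term $-2\|\p\bar\p\zeta\|^2$ created by differentiating the weight $2t$ in time (the full Hessian norm controls the complex Hessian norm). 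Applying the maximum principle to this combination, whose initial value is $|\nabla r|^2\le 1$, gives $|\nabla\zeta|^2+2t\|\p\bar\p\zeta\|^2\le 1$ and hence $\|\p\bar\p\zeta\|\le 1/\sqrt{2t}$. Note that this also disposes of your first anticipated obstacle: the weight $2t$ vanishes at $t=0$, so the singular initial Hessian of $r$ never enters and no restarting of the flow at a positive time is required.
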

\begin{proof} The part (i) follows from Corollary 1.1 and Lemma 1.4 of \cite{NT-2003}. For part (ii), note that the Bochner formula implies that
\begin{equation}\label{pre-1}
\heat |\nabla \zeta|^2\ge 2\|\nabla^2\zeta\|^2.
\end{equation}
As in the proof of Lemma 3.1 of \cite{NT-2003}, by multiplying a suitable cut-off function to (\ref{pre-1}) and performing the integration by parts we have that for any $x\in M$,  and for any $T, R>0$,
\begin{equation} \label{pre-2}
\int_0^T\aint_{B_x(R)}\|\nabla^2\zeta\|^2 \le C\lf(R^{-2}\int_0^T\aint_{B_x(2R)}|\nabla\zeta|^2+ \aint_{B_x(2R)}|\nabla r|^2\ri)\le C(1+R^{-2}T)
\end{equation}
for some constant $C$ independent of $x$ and $R$. Now one can apply Theorem 1.1 of \cite{LiTam1991} to conclude the bound of $\|\p\bar{\p} \zeta\|$, since by the proof of Lemma 2.1 of \cite{NT-2003} (see also \cite{GK}, Lemma 5.1 of \cite{Wi}) one only needs the nonnegativity of the  quadratic orthogonal bisectional curvature to conclude that $\|\p\bar{\p} \zeta\|(x, t)$ is a sub-solution to the heat equation (see also (\ref{eq-11-form special}) below). More precisely under the curvature assumption of (ii) we have that
\begin{equation}\label{pre-3}
\heat \|\p\bar{\p} \zeta\|^2(x, t) \ge 2 \|\nabla \p\bar{\p} \zeta\|^2(x, t).
\end{equation}
Putting (\ref{pre-2}) and (\ref{pre-3}) together we have that
\begin{equation}\label{pre-4}
\heat \left(|\nabla \zeta|^2+2t\|\p\bar{\p} \zeta\|^2\right)\ge 0.
\end{equation}
The estimate $|\nabla \zeta|(x, t)\le 1$ and (\ref{pre-3}) enable one to apply the maximum principle to
$|\nabla \zeta|^2+2t\|\p\bar{\p} \zeta\|^2$, and conclude that
$$
|\nabla \zeta|^2(x, t)+2t\|\p\bar{\p} \zeta\|^2(x, t) \le 1.
$$
This gives the precise upper bound claimed in part (ii).
\end{proof}

The Bochner formulae we have applied above are the special cases of the general formulae described below. Let $\eta$ be a $(p, q)$-form valued in a holomorphic Hermitian vector bundle $E$ with local frame $\{E_\a\}$ and locally $\eta=\sum \eta^\a E_{\a}$.
$$
\eta^\a = \frac{1}{p!q!}\sum \eta^\a_{I_p\bar{J}_q}dz^{I_p}\wedge dz^{\bar{J}_q}.
$$
Here $I_p=(i_1,\cdots,i_p), \bar{J}_q=(\bar{j}_1, \cdots,\bar{j}_q)$ and $dz^{I_p}=dz^{i_1}\wedge
\cdots\wedge dz^{i_p}$, $dz^{\bar{J}_q}=dz^{\overline{j_1}}\wedge\cdots \wedge dz^{\overline{j_q}}.$
The Kodaira-Bochner formulae include the following two identities:
\begin{eqnarray}
 (\Delta_{\bar{\partial}}\eta)^{\a}_{I_p \bar{J}_q} & = & -\sum_{ij}g^{\bar{j}i}\nabla_i\nabla_{\bar{j}}\eta^{\a}_{I_p \bar{J}_q}+
\sum_{\nu = 1}^q \Omega^{\a\bar{l}}_{\b\, \bar{j}_\nu}\eta^{\b}_{I_p \bar{j}_1\cdots (\bar{l})_\nu \cdots\bar{j}_q}
 \nonumber \\
& \,&  +\sum_{\nu =1}^{q}R^{\bar{l}}_{\,\,
\bar{j}_\nu}\eta^{\a}_{I_p \bar{j}_1\cdots(\bar{l})\cdots\bar{j}_q}
-\sum_{\mu = 1}^{p}\sum_{\nu =1 }^{q}R^{\,
k\bar{l}}_{i_\mu\,\,\bar{j}_\nu}\eta^{\a}_{i_1\cdots(k)_\mu\cdots
 i_p \bar{j}_1\cdots(\bar{l})_\nu\cdots\bar{j}_q}\label{eq:43}
\end{eqnarray}
and
\begin{eqnarray}
 (\Delta_{\bar{\partial}}\eta)^{\a}_{I_p\bar{J}_q} & =&
-\sum_{ij}g^{i\bar{j}}\nabla_{\bar{j}}\nabla_{i}
\eta^\a_{I_p\bar{J}_q} + \sum_{\nu = 1}^q \Omega^{\a\bar{l}}_{\b
\bar{j}_\nu} \eta^{\b}_{I_p \bar{j}_1\cdots (\bar{l})_\nu
\cdots\bar{j}_q} -\sum_{\b}\Omega_{\b}^{\a}
\eta^{\b}_{I_p \bar{J}_q} \nonumber \\
& \, & +\sum_{\mu =1 }^{p}R_{i_\mu}^{\,\,
k}\eta^\a_{i_1\cdots(k)_\mu\cdots i_p \bar{J}_q} -\sum_{\mu =
1}^{p}\sum_{\nu =1 }^{q}R^{\, \, k\bar{l}}_{i_\mu\, \, \,
\bar{j}_\nu}\eta^{\a}_{i_1\cdots(k)_\mu\cdots
 i_p \bar{j}_1\cdots(\bar{l})_\nu\cdots\bar{j}_q}, \label{eq:44}
\end{eqnarray}
where $(k)_{\mu}$ means that the index in the $\mu$-th position is replaced by $k$. Here
$$
\Theta^\a_{\beta}=\frac{\sqrt{-1}}{2\pi}\sum \Omega^\a_{\beta \, i\bar{j}}dz^i\wedge d\bar{z}^j
$$
is the curvature of $E$ and $\Omega^\a_\beta=\Lambda \Theta^\a_{\beta}$ is the mean curvature.

\begin{lma}\label{lma-pre-1} Let $(M,g)$ be a \K manifold.

\begin{enumerate}
  \item Suppose that $M$ has nonnegative quadratic bisectional curvature. Let $\eta(x, t)$ be a $(1,1)$-form satisfying
$$
\frac{\p}{\p t} \eta(x, t)-\Delta \eta(x, t)=\xi(x, t)
$$
with $\xi(x, t)$ being another $(1,1)$-form. Then
\begin{equation}\label{eq-11-form special}
\heatt \|\eta\|^2(x, t) \le -2\|\nabla \eta\|^2(x, t) +2\langle \xi, \eta\rangle (x, t).
\end{equation}
In particular, for any $\e>0$,
\bee
\heatt \lf(\|\eta\|^2(x, t)+\e\ri)^\frac12 \le     ||\xi||(x,t).
\eee
  \item Suppose that $M$ has nonnegative Ricci curvature. Let $\sigma(x, t)$ be a $(1,0)$-form satisfying
$$
\frac{\p}{\p t}\sigma(x, t)-\Delta \sigma(x, t)=0.
$$
 Then
\begin{equation}\label{eq-11-form special-2}
\heatt \|\sigma\|^2(x, t) \le -2\|\nabla \sigma\|^2(x, t).
\end{equation}
In particular, for any $\e>0$,
\bee
\heatt \lf(\|\sigma\|^2(x, t)+\e\ri)^\frac12 \le     0.
\eee
\end{enumerate}

\end{lma}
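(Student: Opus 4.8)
The plan is to deduce both statements from the Kodaira--Bochner identities \eqref{eq:43}--\eqref{eq:44}, specialized to the trivial Hermitian bundle. For part (1), averaging \eqref{eq:43} and \eqref{eq:44} applied to a $(1,1)$-form and using $\Delta=-\Delta_d=-2\Delta_{\bar\p}$, I would write
\[
\Delta\eta=\widehat\Delta\eta-\mathcal{R}(\eta),\qquad \widehat\Delta\eta:=g^{k\bar l}\big(\nabla_k\nabla_{\bar l}+\nabla_{\bar l}\nabla_k\big)\eta,
\]
where, in a unitary frame, $\mathcal{R}(\eta)_{i\bar j}=R_{l\bar j}\eta_{i\bar l}+R_{i\bar k}\eta_{k\bar j}-2R_{i\bar jk\bar l}\eta_{k\bar l}$ (summation over repeated indices). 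Since $\widehat\Delta$ coincides with the Laplace--Beltrami operator $\Delta$ on functions and the Bochner identity $\widehat\Delta\|\eta\|^2=2\,\mathrm{Re}\,\la\widehat\Delta\eta,\eta\ra+2\|\nabla\eta\|^2$ holds, substituting $\eta_t=\Delta\eta+\xi=\widehat\Delta\eta-\mathcal{R}(\eta)+\xi$ gives directly
\[
\heatt\|\eta\|^2=-2\|\nabla\eta\|^2+2\,\mathrm{Re}\,\la\xi,\eta\ra-2\,\mathrm{Re}\,\la\mathcal{R}(\eta),\eta\ra .
\]
Everything thus reduces to the pointwise sign of $\mathrm{Re}\,\la\mathcal{R}(\eta),\eta\ra$.

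To handle the curvature term, I would split $\eta$ into its Hermitian and anti-Hermitian parts (a real and an imaginary $(1,1)$-form), observe that $\mathcal{R}$ preserves this splitting and is self-adjoint, and reduce to a real form. Diagonalizing $(\eta_{i\bar j})=\mathrm{diag}(\lambda_1,\dots,\lambda_n)$ in a unitary frame and using the K\"ahler symmetries $R_{i\bar jk\bar l}=R_{k\bar li\bar j}$ together with $\sum_j R_{i\bar ij\bar j}=R_{i\bar i}$, the three terms collapse to
\[
\la\mathcal{R}(\eta),\eta\ra=2\sum_i R_{i\bar i}\lambda_i^2-2\sum_{i,j}R_{i\bar ij\bar j}\lambda_i\lambda_j=\sum_{i,j}R_{i\bar ij\bar j}(\lambda_i-\lambda_j)^2\ge 0
\]
by \eqref{assumption-qu-or}. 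This proves $\heatt\|\eta\|^2\le-2\|\nabla\eta\|^2+2\la\xi,\eta\ra$. For the second assertion of (1), set $u=\|\eta\|^2$ and $w=(u+\e)^{1/2}$; then $\heatt w=\tfrac12(u+\e)^{-1/2}\heatt u+\tfrac14(u+\e)^{-3/2}|\nabla u|^2$. Feeding in the bound just obtained together with the Kato-type inequality $|\nabla u|^2=|\nabla\|\eta\|^2|^2\le 4\|\eta\|^2\|\nabla\eta\|^2\le 4(u+\e)\|\nabla\eta\|^2$, the two $\|\nabla\eta\|^2$ contributions cancel exactly, leaving $\heatt w\le (u+\e)^{-1/2}\,\mathrm{Re}\,\la\xi,\eta\ra\le (u+\e)^{-1/2}\|\xi\|\,\|\eta\|\le\|\xi\|$.

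Part (2) is the same argument for a $(1,0)$-form with $\xi=0$: here \eqref{eq:43}--\eqref{eq:44} give $\Delta\sigma=\widehat\Delta\sigma-\Ric(\sigma)$ with $\Ric(\sigma)_i=R_{i\bar k}\sigma_k$, so $\heatt\|\sigma\|^2=-2\|\nabla\sigma\|^2-2\sum_{i,k}R_{i\bar k}\sigma_k\bar\sigma_i\le-2\|\nabla\sigma\|^2$ because $\Ric\ge0$, and the passage to $(\|\sigma\|^2+\e)^{1/2}$ is identical (now with no $\xi$ term surviving, giving $\le 0$).

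The heart of the matter, and the step I expect to require the most care, is the algebraic collapse $\la\mathcal{R}(\eta),\eta\ra=\sum_{i,j}R_{i\bar ij\bar j}(\lambda_i-\lambda_j)^2$: tracking the Ricci and bisectional contributions coming from \eqref{eq:43}--\eqref{eq:44} and invoking the full set of K\"ahler curvature symmetries is exactly where one sees that \emph{nonnegative quadratic orthogonal bisectional curvature} (rather than nonnegativity of the bisectional curvature) is all that is needed. The remaining ingredients---the Bochner identity for $\|\eta\|^2$, the reduction to real forms, and the manipulation with $(\,\cdot\,+\e)^{1/2}$---are routine.
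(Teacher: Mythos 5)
Your argument is correct and is exactly the route the paper takes: its proof of this lemma is the one-line instruction to apply the Kodaira--Bochner formulae \eqref{eq:43}--\eqref{eq:44} and invoke the curvature assumption, and your write-up simply supplies the details (the averaging of the two identities, the diagonalization giving $\langle\mathcal{R}(\eta),\eta\rangle=\sum_{i,j}R_{i\bar ij\bar j}(\lambda_i-\lambda_j)^2\ge0$, and the standard $(\,\cdot\,+\e)^{1/2}$ regularization with the Kato inequality). The algebraic collapse you flag as the delicate step is precisely the computation the paper attributes elsewhere to Goldberg--Kobayashi and to Lemma 5.1 of Wilking, so nothing is missing.
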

\begin{proof}
  Applying (\ref{eq:43}) and (\ref{eq:44}) to   $\eta(x, t), \sigma(x, t)$ and using the curvature assumption, the results follow.
\end{proof}

Our strategy of constructing  a global solution $\eta$ to the Hodge-Laplace heat equation is by two  approximations. First we construct $\rho^{(i)}$ by multiplying the initial $d$-closed $(1,1)$-form $\rho$ by suitable cut-off functions $\phi^{(i)}$,  which we describe below.

Let $\kappa(s)$ be a smooth cut-off function on $\R$ such that $\kappa(s)=0$ for $|s|>1$. For  a sequence $\{R_i\}$ with $R_i\to \infty$, let
$$\phi^{(i)}(x)=\kappa\left(\frac{\zeta(x, 1)}{R_i}\right)$$
where $\zeta(z, t)$ is the exhaustion function constructed by solving the heat equation with distance function as the initial data. Proposition \ref{prop-pre} implies that there exists an absolute constant $C_1>0$ such that
\begin{equation}\label{eq-pre-1}
|\nabla \phi^{(i)}|(x) +\|\p\bar{\p} \phi^{(i)}\|\le \frac{C}{R_i}.
\end{equation}

Let $\rho^{(i)}(x)=\phi^{(i)}(x) \rho(x)$. In general one can not expect that $\rho^{(i)}$ is closed. But we have the following estimate:
\begin{equation} \label{eq-pre-2}
\|\p \rho^{(i)}\|(x)+\|\bar{\p}\rho^{(i)}\|(x)+\|\p\bar{\p} \rho^{(i)}\|(x)\le \frac{C_1}{R_i}\|\rho\|(x).
\end{equation}

A solution $\eta^{(i)}(x, t)$ of the Hodge-Laplace heat equation  with initial data $\rho^{(i)}(x)$ is constructed via the  compact exhaustion detailed as follows. Let $\{\Omega_\nu\}$ be a sequence of exhaustion domains. Let $\Phi_{\nu}$ be the solution of the following initial boundary value problem:
\be\label{eq-heat-form-2}
\left\{
  \begin{array}{ll}
&    \Phi_t- \Delta \Phi   =0, \text{ in $\Omega_\nu \times(0,\infty)$}; \\
  &  \lim_{t\to 0}\Phi(x,t)  = \rho^{(i)}(x), \ x\in\Omega_\nu;\\
   & \mathbf{n}\Phi  = 0,  \text{ on $\p\Omega_\nu \times(0,\infty)$};\\
    &\mathbf{t}\Phi=0, \text{ on $\p\Omega_\nu \times(0,\infty)$}.
  \end{array}
\right.
\ee
Here $\mathbf{n} \Phi$ and $\mathbf{t} \Phi$ denote the normal and tangential parts of $\Phi$ (see \cite{Morrey} for the basic definitions and related properties). The solvability  of (\ref{eq-heat-form-2}) is classical. See for example \cite{Evans}, \cite{Morrey}, \cite{LSU}. For $\nu$ large, the solution $\Phi_\nu$ is smooth since the initial and boundary values are compatible.  Let
\begin{equation}\label{def}
v^{(i)}(x, t)\doteqdot\int_M H(x, y, t) \|\rho^{(i)}\|(y)\, d\mu(y) \mbox{ and }  v(x, t)\doteqdot\int_M H(x, y, t)\|\rho\|(y)\, d\mu(y).\end{equation}
 Clearly $v^{(i)}(x, t)\le v(x, t)$, provided $v$ is defined. The Bochner formula (\ref{eq-11-form special}) and the maximum principle imply that
 $$
 \|\Phi_\nu\|(x, t)\le v^{(i)}(x, t).
 $$
Thus by the Schauder's estimate, after possibly passing to a subsequence, $\{\Phi_\nu\}$ converges to a limit solution $\eta^{(i)}(x, t)$, which is a $(1,1)$-form with the initial value $\rho^{(i)}(x)$. Note that if $\rho$ is real, then $\eta^{(i)}$ is real by uniqueness. To summarize, we have:

\begin{lma}\label{lma-pre-2} Let $\eta^{(i)}$ be as above. Suppose that $v(x, t)$ in (\ref{def})  is well-defined. Then, after possibly passing to a subsequence, $\{\eta^{(i)}\}$  converges to a (1,1)-form $\eta$ satisfying the Hodge-Laplacian heat equation on $M\times[0,\infty)$ with initial value $\rho$. Moreover,
\begin{equation}\label{eq-pre3}
\|\eta^{(i)}\|(x, t)\le v^{(i)}(x, t) \mbox{ and } \|\eta\|(x, t)\le v(x, t).
\end{equation}
In particular for each $i$, $\|\eta^{(i)}\|$ is uniformly bounded. 
\end{lma}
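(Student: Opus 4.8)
The plan is to construct $\eta^{(i)}$ as the limit of the solutions $\Phi_\nu$ to the initial-boundary value problems \eqref{eq-heat-form-2}, then pass to a further limit in $i$. First I would record the pointwise bound on $\Phi_\nu$: by part (1) of Lemma \ref{lma-pre-1} applied with $\xi=0$ (since $\Phi_\nu$ solves the homogeneous Hodge-Laplace heat equation), the function $(\|\Phi_\nu\|^2+\e)^{1/2}$ is a subsolution of the scalar heat equation on $\Omega_\nu\times(0,\infty)$; on the parabolic boundary it is controlled at $t=0$ by $(\|\rho^{(i)}\|^2+\e)^{1/2}$ and the Neumann/tangential conditions prevent any boundary contribution (here one uses the standard fact, found in \cite{Morrey}, that the boundary terms arising from integration by parts vanish for forms satisfying $\mathbf{n}\Phi=\mathbf{t}\Phi=0$). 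Comparing with the solution $v^{(i)}_\e$ of the heat equation with initial data $(\|\rho^{(i)}\|^2+\e)^{1/2}$ and letting $\e\to0$ gives $\|\Phi_\nu\|(x,t)\le v^{(i)}(x,t)$ for all $\nu$; since $\rho^{(i)}$ is compactly supported and bounded, $v^{(i)}$ is finite and continuous on $M\times(0,\infty)$, so in particular $\|\Phi_\nu\|$ is locally uniformly bounded.

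Next I would invoke interior Schauder estimates for the (linear, constant-in-time) parabolic system $\partial_t\Phi=\Delta\Phi$ on $(1,1)$-forms: the uniform $C^0$ bound just obtained upgrades, on any compact subset of $M\times(0,\infty)$, to uniform $C^{k,\alpha}$ bounds for every $k$. By Arzelà–Ascoli and a diagonal argument over an exhaustion of $M\times(0,\infty)$ one extracts a subsequence $\Phi_{\nu}\to\eta^{(i)}$ in $C^\infty_{\mathrm{loc}}$, and the limit $\eta^{(i)}$ solves the Hodge-Laplace heat equation on $M\times(0,\infty)$. The initial condition $\eta^{(i)}(\cdot,0)=\rho^{(i)}$ follows since $\rho^{(i)}$ is smooth and compactly supported, so the $\Phi_\nu$ attain it uniformly; reality of $\eta^{(i)}$ when $\rho$ is real follows from uniqueness of the bounded solution (again via Lemma \ref{lma-pre-1} and the comparison above applied to $\eta^{(i)}-\overline{\eta^{(i)}}$). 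Passing the bound $\|\Phi_\nu\|\le v^{(i)}$ to the limit gives $\|\eta^{(i)}\|\le v^{(i)}$, and since $v^{(i)}\le v$ pointwise (as $\|\rho^{(i)}\|=\phi^{(i)}\|\rho\|\le\|\rho\|$), also $\|\eta^{(i)}\|\le v$. Because $v^{(i)}$ is the heat evolution of a bounded compactly supported function, it is bounded on all of $M\times[0,\infty)$, which gives the final "uniformly bounded" assertion for each fixed $i$.

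Finally, to produce $\eta$ itself I would repeat the same compactness argument in the index $i$: the bounds $\|\eta^{(i)}\|\le v$ are uniform in $i$ on compact subsets of $M\times(0,\infty)$ (using that $v$ is finite there, which is where the hypothesis "$v$ in \eqref{def} is well-defined" is used), interior Schauder estimates again give uniform local $C^\infty$ bounds, and a diagonal subsequence converges in $C^\infty_{\mathrm{loc}}$ to a solution $\eta$ of the Hodge-Laplace heat equation with $\|\eta\|\le v$. The initial value of $\eta$ is $\rho$ because $\rho^{(i)}=\phi^{(i)}\rho\to\rho$ locally uniformly with all derivatives (by \eqref{eq-pre-1}, \eqref{eq-pre-2}, and $\phi^{(i)}\to1$ on compacta).

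I expect the main obstacle to be the boundary analysis for \eqref{eq-heat-form-2}: one must check carefully that the Neumann ($\mathbf{n}\Phi=0$) and tangential ($\mathbf{t}\Phi=0$) boundary conditions, together with the Bochner inequality \eqref{eq-11-form special}, really do yield a \emph{boundary subsolution} of the scalar heat equation with no spurious boundary term — i.e. that integrating $\heatt\|\Phi_\nu\|^2$ against a nonnegative test function produces no contribution from $\partial\Omega_\nu$. This is the point where the precise form of the absolute boundary conditions and the Gaffney-type identities from \cite{Morrey} enter, and it is what justifies the clean comparison $\|\Phi_\nu\|\le v^{(i)}$; everything else (Schauder, diagonal extraction, passing inequalities to limits) is routine.
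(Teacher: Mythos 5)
Your proposal is correct and follows essentially the same route as the paper: the Bochner inequality of Lemma \ref{lma-pre-1} plus the maximum principle give $\|\Phi_\nu\|\le v^{(i)}$, and Schauder estimates with a diagonal argument produce $\eta^{(i)}$ and then $\eta$. The only simplification worth noting is that the combined conditions $\mathbf{n}\Phi=\mathbf{t}\Phi=0$ force $\Phi_\nu$ to vanish identically on $\p\Omega_\nu\times(0,\infty)$, so the comparison on the parabolic boundary is immediate and no Gaffney-type integration by parts is needed.
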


Next we will prove that
under certain conditions,  $\eta$ satisfies the conditions in Theorem 2.1(a).

\section{Global solutions of the Hodge-Laplace heat equation }

 In this section we prove the following result on the global solutions of the Hodge-Laplace heat equation.

\begin{thm}\label{thm-heat-form-2}
Let $(M, g)$ be a complete K\"ahler manifold with nonnegative quadratic orthogonal bisectional curvature and with nonnegative Ricci curvature. Assume that $\rho$ is a $d$-closed $(1,1)$-form such that $f=\|\rho\|$ satisfies
\be\label{f-decay}
\limsup_{R\to \infty} \frac{k_f(R)}{R^2}=0.
\ee
Then there exists a solution of
\be\label{eq-heat-form-3}
\left\{
  \begin{array}{ll}
    \eta_t- \Delta \eta   &=0, \text{ in $M\times[0,\infty)$}; \\
    \eta(x,0)  &= \rho(x), \quad   x\in M.
  \end{array}
\right.
\ee
such that $\eta$ is a closed $(1,1)$-form.   Furthermore we have that
 \begin{itemize}
   \item [(a)]  for any $T>0$,
\be\label{eq-eta-decay}
\lim_{R\to\infty}\frac1{R^2V_o(R)}\int_0^T\int_{B_o(R)}||\eta||(x,t) d\mu(x)dt=0;
\ee
   \item [(b)]$\lim_{t\to\infty}\eta(x,t)=0$ for all $x\in M$ provided that $\lim_{R\to \infty} k_f(R)\to 0$.
 \end{itemize}
\end{thm}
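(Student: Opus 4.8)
The plan is to take for $\eta$ the subsequential limit $\eta=\lim_i\eta^{(i)}$ produced by Lemma \ref{lma-pre-2}, and then to extract all three conclusions from the single pointwise bound $\|\eta\|(x,t)\le v(x,t)$, where $v(x,t)=\int_M H(x,y,t)f(y)\,d\mu(y)$. First one must check $v$ is finite: the hypothesis gives $k_f(R)=o(R^2)$, so by Bishop--Gromov $\int_{B_o(R)}f=o(R^{m+2})$, and splitting $M$ into geodesic annuli about $x$ against the Li--Yau Gaussian upper bound for $H$ shows $v(x,t)<\infty$ for every $(x,t)$; hence Lemma \ref{lma-pre-2} applies, and by interior Schauder estimates we may take the convergence $\eta^{(i)}\to\eta$ to be in $C^\infty_{loc}$.

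For the $d$-closedness of $\eta$ --- the heart of the matter --- I would \emph{not} flow $d\eta^{(i)}$ (the Bochner curvature term for $3$-forms is not controlled by the hypotheses), but instead track the three lower-degree \emph{traces} $\sigma_1^{(i)}:=\Lambda\p\eta^{(i)}$ (type $(1,0)$), $\sigma_2^{(i)}:=\Lambda\bar\p\eta^{(i)}$ (type $(0,1)$) and $\sigma_3^{(i)}:=\Lambda\p\bar\p\eta^{(i)}$ (type $(1,1)$). On a \K manifold $\Lambda$, $\p$ and $\bar\p$ commute with the Hodge Laplacian, so each $\sigma_k^{(i)}$ solves the Hodge-Laplace heat equation with initial value $\Lambda\p\rho^{(i)}$, $\Lambda\bar\p\rho^{(i)}$, $\Lambda\p\bar\p\rho^{(i)}$ respectively; and since $\rho$ is $d$-closed, $\p\rho=\bar\p\rho=0$, so $\p\rho^{(i)}=\p\phi^{(i)}\wedge\rho$, $\bar\p\rho^{(i)}=\bar\p\phi^{(i)}\wedge\rho$, $\p\bar\p\rho^{(i)}=(\p\bar\p\phi^{(i)})\wedge\rho$, whence by \eqref{eq-pre-1} every one of these initial values is supported in $\{d\phi^{(i)}\ne0\}$ and has norm $\le\frac{C}{R_i}\|\rho\|$. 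By Lemma \ref{lma-pre-1}, $(\|\sigma_1^{(i)}\|^2+\e)^{1/2}$ and $(\|\sigma_2^{(i)}\|^2+\e)^{1/2}$ are heat subsolutions under $\Ric\ge0$, and $(\|\sigma_3^{(i)}\|^2+\e)^{1/2}$ is one under nonnegative quadratic orthogonal bisectional curvature. Letting $w_k^{(i)}(x,t):=\int_M H(x,y,t)\,\|(\text{$k$th initial value})\|(y)\,d\mu(y)$, so that $w_k^{(i)}\le\frac{C}{R_i}v$ and $w_k^{(i)}$ is bounded (its initial value being bounded with compact support), and applying the maximum principle of Lemma \ref{lma-Li-Tam} to $\max\bigl((\|\sigma_k^{(i)}\|^2+\e)^{1/2}-w_k^{(i)},\,0\bigr)$ --- a bounded nonnegative heat subsolution, since for fixed $i$ the form $\sigma_k^{(i)}$ is bounded via $\|\eta^{(i)}\|\le v^{(i)}$ and interior parabolic estimates, so \eqref{eq-subsolution} holds --- and then letting $\e\to0$, one obtains $\|\sigma_k^{(i)}\|(x,t)\le\frac{C}{R_i}v(x,t)$ for $k=1,2,3$ (the case $k=2$ also follows from $k=1$ by conjugation, $\eta^{(i)}$ being real). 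Sending $i\to\infty$ along the $C^\infty_{loc}$-convergent subsequence gives $\Lambda\p\eta=\Lambda\bar\p\eta=\Lambda\p\bar\p\eta\equiv0$, hence $d\eta=0$ by Remark \ref{rem-general-1}. The delicate points here --- and the step I expect to be the main obstacle --- are verifying that $\Lambda\p\eta^{(i)},\Lambda\bar\p\eta^{(i)},\Lambda\p\bar\p\eta^{(i)}$ genuinely solve the Hodge-Laplace heat equation on all of $M$ (via $[\Delta,\Lambda]=[\Delta,\p]=[\Delta,\bar\p]=0$), that the Bochner term of the $(1,1)$-trace $\Lambda\p\bar\p\eta^{(i)}$ is controlled precisely by the nonnegative quadratic orthogonal bisectional curvature hypothesis (the content of Lemma \ref{lma-pre-1}(1) in this case), and that the comparison via Lemma \ref{lma-Li-Tam} is legitimate for each fixed $i$ before one may pass to the limit.

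For (a), from $\|\eta\|\le v$ and Lemma \ref{lma-Ni-Tam} with $p=1$, $h=f$ (valid once $R^2\ge t$),
\[
\frac1{R^2V_o(R)}\int_0^T\!\!\int_{B_o(R)}\|\eta\|\,d\mu\,dt\ \le\ \frac{CTk_f(4R)}{R^2}+\frac{C}{R^2}\int_0^T t^{-1}\!\int_{4R}^\infty\! s\,e^{-s^2/20t}k_f(s)\,ds\,dt,
\]
and both terms tend to $0$ as $R\to\infty$: the first because $k_f(4R)/R^2\to0$, the second because $k_f(s)=o(s^2)$ and the explicit value of $\int_{4R}^\infty s^3 e^{-s^2/20t}\,ds$ then makes the whole term $O\!\bigl(e^{-cR^2/T}\bigr)$ for $t\le T$. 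For (b), again $\|\eta\|\le v$, so it suffices to show $v(x,t)\to0$ as $t\to\infty$ whenever $k_f(R)\to0$; the Li--Yau bound $H(x,y,t)\le\frac{C_n}{V_x(\sqrt t)}e^{-r(x,y)^2/5t}$, annular decomposition about $x$, and Bishop--Gromov ($V_x((k+1)\sqrt t)\le (k+1)^m V_x(\sqrt t)$) give $v(x,t)\le C\sum_{k\ge0}e^{-k^2/5}(k+1)^m k_f^{(x)}\!\bigl((k+1)\sqrt t\bigr)$, where $k_f^{(x)}$ is the $f$-average over balls about $x$; since $k_f(R)\to0$ forces $k_f^{(x)}(R)\to0$ (change of center, Bishop--Gromov) and $\sup_R k_f^{(x)}(R)<\infty$, dominated convergence in the series gives $v(x,t)\to0$.
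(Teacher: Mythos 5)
Your construction of $\eta$ as the limit of the $\eta^{(i)}$ from Lemma \ref{lma-pre-2}, the derivation of (a) and (b) from $\|\eta\|\le v$, and the treatment of $\Lambda\p\eta$ and $\Lambda\bar\p\eta$ all follow the paper's route. The genuine gap is in your handling of $\sigma_3^{(i)}=\Lambda\p\bar\p\eta^{(i)}$. To invoke the maximum principle of Lemma \ref{lma-Li-Tam} you must verify \eqref{eq-subsolution} for $\lf(\|\sigma_3^{(i)}\|^2+\e\ri)^{1/2}-w_3^{(i)}$, and you do so by asserting that $\sigma_3^{(i)}$ is bounded ``via $\|\eta^{(i)}\|\le v^{(i)}$ and interior parabolic estimates.'' On a complete noncompact manifold with only $\Ric\ge0$ and nonnegative quadratic orthogonal bisectional curvature there is no uniform bound on the full curvature tensor or on the injectivity radius, so the interior Schauder constants are not uniform in the base point: a sup bound on $\|\eta^{(i)}\|$ gives only \emph{locally} bounded second derivatives, with constants that may blow up at spatial infinity (and that degenerate as $t\to0^+$). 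The natural substitute --- a spacetime $L^2_{loc}$ bound on $\|\nabla^2\eta^{(i)}\|$ by iterating the Bochner formula on $\nabla\eta^{(i)}$ --- fails for exactly the reason you yourself give for refusing to flow $d\eta^{(i)}$: the curvature terms in the Bochner formula for that mixed $(2,1)$/$(1,2)$-type object are not controlled by the hypotheses (this is the condition (NCF) discussed at the end of Section \ref{s-initial boundary}). Trying instead to get boundedness of $\sigma_3^{(i)}$ from the subsolution property of $(\|\sigma_3^{(i)}\|^2+\e)^{1/2}$ is circular, since the maximum principle is what requires the bound. (For $\sigma_1^{(i)},\sigma_2^{(i)}$ your boundedness claim is also unjustified as stated, but there it is repairable: $\heatt\|\eta^{(i)}\|^2\le-2\|\nabla\eta^{(i)}\|^2$ together with the uniform bound on $\|\eta^{(i)}\|$ gives $\int_0^T\aint_{B_o(r)}\|\nabla\eta^{(i)}\|^2\le C_i(1+r^{-2}T)$, i.e.\ \eqref{eq-subsolution} with $p=2$; this is exactly Lemma \ref{lma-global-1}.) A secondary issue is that attaining the initial value $\Lambda\p\bar\p\rho^{(i)}$ requires $C^2$ convergence of $\eta^{(i)}(\cdot,t)$ to $\rho^{(i)}$ as $t\to0$, which you do not address.

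The paper circumvents the second-derivative obstruction by working with the time integral $\theta^{(i)}=\int_0^t\Lambda\p\bar\p\eta^{(i)}\,d\tau$ rather than with $\Lambda\p\bar\p\eta^{(i)}$ itself. By the K\"ahler identity of Lemma \ref{lma-adjoint1}, $\theta^{(i)}$ splits as in \eqref{eq:58} into: $\int_0^t\p\bar\p w^{(i)}$ with $w^{(i)}=\Lambda\eta^{(i)}$ a bounded scalar heat solution, whose Hessian is $L^2$-controlled in spacetime by the iterated scalar Bochner formula under $\Ric\ge0$ alone (see \eqref{eq:57}); $\int_0^t\ii\Delta_{\bar\p}\eta^{(i)}$, which telescopes to a quantity pointwise bounded by $v^{(i)}+\|\rho^{(i)}\|$ (see \eqref{eq:55}); and first derivatives of the bounded heat-solution $1$-forms $\sigma_1^{(i)},\sigma_2^{(i)}$, again $L^2$-controlled by Bochner under $\Ric\ge0$ (see \eqref{eq:56}). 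This yields the integral bound of Lemma \ref{help-lma1}; since $\theta^{(i)}$ solves the inhomogeneous heat equation with fixed right-hand side $\Lambda\p\bar\p\rho^{(i)}$ and vanishes identically at $t=0$ (Lemma \ref{lma-help2}), the maximum principle applies and gives $\|\theta^{(i)}\|\le CR_i^{-1}\int_0^tv$, which tends to $0$ (Lemma \ref{lma-global-2}). To salvage your more direct route you would have to supply an independent proof that $\Lambda\p\bar\p\eta^{(i)}$ satisfies \eqref{eq-subsolution}, and I do not see how to do that under the stated curvature hypotheses.
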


\begin{proof} By the decay assumption on $||\rho||$,
$$
v(x,t)=\int_MH(x,y,t)||\rho||(y)d\mu(y)
$$
is well-defined by \cite[Lemma 2.1]{NT-2003}. Let $\{\eta^{(i)}(x, t)\}$ and $\eta$ be the solutions of the Hodge-Laplace heat equation obtained in Lemma \ref{lma-pre-2}, which are real (1,1)-form.
Then $\|\eta\|(x, t)\le v(x, t)$.
Now by \eqref{f-decay} we have that $\frac{k_f(4R)}{R^2}\to 0$
as $R\to\infty$. By Lemma \ref{lma-Ni-Tam} and the fact that $\|\eta\|(x,t)\le v(x,t)$, we conclude that \eqref{eq-eta-decay} is true for any $T>0$.

By \cite[Theorem 1.1]{LiTam1991}, if additionally assume that $\lim_{R\to \infty} k_f(R)= 0$,  we have  that $\lim_{t\to \infty}v(x,t)=0$. Hence $\lim_{t\to\infty} \eta(x,t)=0 $.

It remains to prove $d\eta=0$. By Remark \ref{rem-general-1}, it is sufficient to prove that $\Lambda\p\eta=\Lambda\bar\p\eta=\Lambda\p\bar\p\eta=0$. These will be established via  lemmas below. With them we complete the proof of the theorem.
\end{proof}

\begin{lma}\label{lma-global-1} Let $\eta^{(i)}$ and $\eta$ as in the proof of Theorem \ref{thm-heat-form-2}. Let $\sigma_1^{(i)}=\Lambda\p\eta$ and $\sigma_2^{(i)}=\Lambda\p\eta^{(i)}$. Then
\be\label{eq-global-1}
||\sigma_j^{(i)}||(x,t)\le  C\int_MH(x,y,t)||\rho_i||(y)d\mu(y)
\ee
for $j=1,2$. In particular, for each $i$, $||\sigma_j^{(i)}||$ are uniformly bounded in $M\times[0,\infty)$. Moreover, $\Lambda\p\eta=\Lambda\bar\p\eta=0$.
\end{lma}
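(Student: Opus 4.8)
The plan is to recognize the $(1,0)$-form $\sigma_2^{(i)}=\Lambda\p\eta^{(i)}$ as itself a solution of the Hodge--Laplace heat equation with a small initial datum, and then to bound its norm by a scalar heat flow through the maximum principle. First I would check that $\Lambda\p$ commutes with $\Delta=-\Delta_d$ on $(1,1)$-forms: on a K\"ahler manifold $\Lambda$ (and $\p$) commute with $\Delta_d=2\Delta_{\p}$; concretely, by the K\"ahler identity $\Lambda\p\eta=\p\Lambda\eta+\ii\bar\p^{*}\eta$, the pieces $\p\Lambda\eta^{(i)}=\p\,\tr(\eta^{(i)})$ (a gradient of a scalar heat solution) and $\bar\p^{*}\eta^{(i)}$ (since $\bar\p^{*}$ commutes with $\Delta_d$) each solve the heat equation. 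Since by Lemma~\ref{lma-pre-2} $\eta^{(i)}$ solves $\eta^{(i)}_t-\Delta\eta^{(i)}=0$ on $M\times[0,\infty)$ and, $\rho^{(i)}$ being smooth with compact support, is smooth up to $t=0$ with $\eta^{(i)}(\cdot,0)=\rho^{(i)}$, this yields that $\sigma_2^{(i)}$ solves $\partial_t\sigma_2^{(i)}-\Delta\sigma_2^{(i)}=0$ with $\sigma_2^{(i)}(\cdot,0)=\Lambda\p\rho^{(i)}$.

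Since $M$ has nonnegative Ricci curvature, Lemma~\ref{lma-pre-1}(2) applies to the $(1,0)$-form $\sigma_2^{(i)}$ and shows $(\|\sigma_2^{(i)}\|^{2}+\e)^{1/2}$ is a subsolution of the scalar heat equation for each $\e>0$; meanwhile interior parabolic estimates for $\eta^{(i)}$ (which is bounded by $v^{(i)}\le\|\rho^{(i)}\|_\infty$ via Lemma~\ref{lma-pre-2}), together with regularity up to $t=0$, show $\|\sigma_2^{(i)}\|$ is bounded on $M\times[0,\infty)$. Put $w^{(i)}(x,t)=\int_M H(x,y,t)\|\Lambda\p\rho^{(i)}\|(y)\,d\mu(y)$, a bounded heat solution with initial value $\|\Lambda\p\rho^{(i)}\|$; applying Lemma~\ref{lma-Li-Tam} to (the positive part of) the bounded subsolution $(\|\sigma_2^{(i)}\|^{2}+\e)^{1/2}-w^{(i)}$, which is $\le\sqrt\e$ at $t=0$, and letting $\e\to0$ gives $\|\sigma_2^{(i)}\|(x,t)\le w^{(i)}(x,t)$. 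By \eqref{eq-pre-2}, $\|\Lambda\p\rho^{(i)}\|\le c_n\|\p\rho^{(i)}\|\le\frac{C}{R_i}\|\rho\|$, so $w^{(i)}\le\frac{C}{R_i}v$ with $v$ the ($i$-independent, finite) function of \eqref{def}; this is \eqref{eq-global-1} and gives the uniform boundedness for each fixed $i$.

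Finally, by Lemma~\ref{lma-pre-2} $\eta^{(i)}\to\eta$, and as the $\eta^{(i)}$ are locally uniformly bounded by $v$, Schauder estimates upgrade this to $C^{\infty}_{\mathrm{loc}}$ convergence on $M\times(0,\infty)$, so $\Lambda\p\eta^{(i)}\to\Lambda\p\eta$ pointwise. Fixing $(x,t)$ with $t>0$ and letting $i\to\infty$ in $\|\Lambda\p\eta^{(i)}\|(x,t)\le\frac{C}{R_i}v(x,t)$ forces $\Lambda\p\eta(x,t)=0$; since $\Lambda\p\eta(\cdot,0)=\Lambda\p\rho=0$ ($\rho$ being $d$-closed), $\Lambda\p\eta\equiv0$, and complex conjugation (legitimate since $\eta$ is a \emph{real} $(1,1)$-form and $\omega$ is real, and it intertwines $\Lambda\p$ with $\Lambda\bar\p$; equivalently, repeat the argument verbatim for $\Lambda\bar\p\eta^{(i)}$) gives $\Lambda\bar\p\eta\equiv0$. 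The crux is the noncompact maximum principle: one must know a priori that $\|\sigma_2^{(i)}\|$ and $w^{(i)}$ are bounded so the decay hypothesis of Lemma~\ref{lma-Li-Tam} is met, and one has to compare with the heat flow $w^{(i)}$ rather than with $\sup\|\Lambda\p\rho^{(i)}\|$, which would be useless because $\|\rho\|$ may grow faster than $R_i$.
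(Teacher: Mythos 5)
Your overall strategy coincides with the paper's: regard $\sigma_j^{(i)}$ as a $(1,0)$-form solution of the Hodge--Laplace heat equation (using that $\Lambda$ and $\p$ commute with $\Delta_d$ on a K\"ahler manifold), use Lemma \ref{lma-pre-1}(2) to make $\lf(\|\sigma_j^{(i)}\|^2+\e\ri)^{1/2}$ a subsolution, compare with the scalar heat flow of $\|\Lambda\p\rho^{(i)}\|$ via Lemma \ref{lma-Li-Tam}, and then combine \eqref{eq-pre-2} with the locally smooth convergence $\eta^{(i)}\to\eta$ to conclude $\Lambda\p\eta=\Lambda\bar\p\eta=0$. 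Your closing observation --- that one must compare with the heat flow $w^{(i)}$ rather than with $\sup\|\Lambda\p\rho^{(i)}\|$ --- is exactly the right point.

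The gap is in how you verify the hypothesis of Lemma \ref{lma-Li-Tam}. You assert that $\|\sigma_2^{(i)}\|$ is bounded on $M\times[0,\infty)$ by ``interior parabolic estimates for $\eta^{(i)}$ together with regularity up to $t=0$.'' On a complete noncompact manifold with only $\Ric\ge 0$ there are no uniform interior estimates of this kind: neither the curvature nor the injectivity radius is uniformly controlled, so $\sup_M\|\eta^{(i)}\|<\infty$ does not yield $\sup_M\|\nabla\eta^{(i)}\|<\infty$ at positive times, and the Bochner formula for $\nabla\eta^{(i)}$ involves full curvature terms that are not assumed bounded. This a priori control is precisely the delicate point in noncompact maximum principles and cannot be dismissed as local regularity. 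The paper sidesteps pointwise bounds entirely: multiplying the Bochner inequality $\heatt\|\eta^{(i)}\|^2\le-2\|\nabla\eta^{(i)}\|^2$ by a cutoff and integrating by parts (as in Proposition \ref{prop-pre}) gives
\bee
\int_0^T\aint_{B_o(r)}\|\nabla\eta^{(i)}\|^2\le C(1+r^{-2}T),
\eee
hence the same integrated bound for $\|\sigma_j^{(i)}\|^2$, which is exactly the $p=2$ decay condition \eqref{eq-subsolution} needed for Lemma \ref{lma-Li-Tam}. In the paper the global boundedness of $\|\sigma_j^{(i)}\|$ is a \emph{consequence} of \eqref{eq-global-1}, not an input to it. With this substitution your argument goes through.
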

\begin{proof} By Lemma \ref{lma-pre-2}, for each $i$, $||\eta^{(i)}||$ is uniformly bounded on $M\times[0,\infty)$. By Lemma \ref{lma-pre-1}
\bee
\heatt ||\eta^{(i)}||^2\le -2||\nabla \eta^{(i)}||^2.
\eee
As in the proof of Proposition \ref{prop-pre}, for all there is $C>0$, such that for $T>0$, $r>0$,
\bee
\int_0^T\aint_{B_o(r)}||\nabla \eta^{(i)}||^2\le C(1+r^{-2}T).
\eee
Hence
\bee
\int_0^T\aint_{B_o(r)}||\sigma_1^{(i)}||^2\le C(1+r^{-2}T).
\eee
On the other hand, by   Lemma \ref{lma-pre-1} again, for any $\e>0$,
\bee
\heatt \lf(||\sigma_1^{(i)}||^2+\e\ri)^\frac12\le 0.
\eee
Hence one can apply the maximum principle to conclude that
\bee
\lf(||\sigma_1^{(i)}||^2(x,t)+\e\ri)^\frac12\le \int_MH(x,y,t)\lf(||\Lambda\p\rho^{(i)}||(y)+\e\ri)d\mu(y).
\eee
Let $\e\to0$, we have
\bee
||\sigma_1^{(i)}||(x,t) \int_MH(x,y,t) ||\Lambda\p\rho^{(i)}||(y)d\mu(y).
\eee
Similarly, one can prove
\bee
||\sigma_2^{(i)}||(x,t) \int_MH(x,y,t) ||\Lambda\bar\p\rho^{(i)}||(y)d\mu(y).
\eee
The last assertion follows from \eqref{eq-pre-2} and the fact that a subsequence of $\eta^{(i)}$ converge locally uniformly to $\eta$.
\end{proof}

Next we want   to prove that $\Lambda \p\bar{\p} \eta(x, t)=0$. For that let
$$\theta=\int_0^t \Lambda \p\bar{\p} \eta(x, \tau)\, d\tau, \quad  \mbox{ and correspondingly  }\quad  \theta^{(i)}(x, t)=\int_0^t  \Lambda \p\bar{\p} \eta^{(i)}(x, \tau)\, d\tau.$$ For that  we need more lemmas.

\begin{lma} \label{help-lma1}  For any $T>0$, there exists $C_i>0$  such that
\begin{equation}\label{eq-theta-1}
\int_0^T\aint_{B_o(R)} \|\theta^{(i)}\|^2(x, t)\, d\mu(x)\, dt \le C_i
\end{equation}
for all $R$.
\end{lma}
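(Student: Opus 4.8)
The plan is to control $\|\theta^{(i)}\|^2$ by relating $\Lambda\p\bar\p\eta^{(i)}$ to the full covariant derivative of $\eta^{(i)}$, and then to reuse the integrated gradient bound already obtained in the proof of Lemma \ref{lma-global-1}. First I would note that, pointwise, $\Lambda\p\bar\p\eta^{(i)}$ is a contraction of $\nabla^2\eta^{(i)}$, hence there is an absolute constant with $\|\Lambda\p\bar\p\eta^{(i)}\|(x,\tau)\le C\|\nabla^2\eta^{(i)}\|(x,\tau)$; however, since we only have an $L^2$-in-space-time bound on $\|\nabla\eta^{(i)}\|$ and not on $\|\nabla^2\eta^{(i)}\|$, the better route is to integrate by parts in space. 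That is, for a cutoff $\psi$ supported in $B_o(2R)$ and equal to $1$ on $B_o(R)$, one writes $\int \psi^2\|\Lambda\p\bar\p\eta^{(i)}\|^2$ and moves one derivative off, picking up $\int \psi^2\|\nabla\eta^{(i)}\|\,\|\nabla(\Lambda\p\eta^{(i)})\|$ plus lower-order terms involving $\nabla\psi$; but this still needs second derivatives. The cleanest approach is instead: by definition $\theta^{(i)}(x,t)=\int_0^t \Lambda\p\bar\p\eta^{(i)}(x,\tau)\,d\tau$, and from Remark \ref{rem-general-1} (applied to $\eta^{(i)}$, which solves $\partial_t\eta^{(i)}+\Delta_\p\eta^{(i)}=0$) together with Lemma \ref{lma-adjoint1} one has an exact identity expressing $\ii\Lambda\p\bar\p\eta^{(i)}$ in terms of $\Delta_{\bar\p}\eta^{(i)}=\tfrac12\Delta\eta^{(i)}$ and the terms $\bar\p\Lambda\p\eta^{(i)}$, $\p\Lambda\bar\p\eta^{(i)}$, i.e. first derivatives of the $(1,0)$- and $(0,1)$-forms $\sigma^{(i)}_j$ from Lemma \ref{lma-global-1}, and of $\p\bar\p\Lambda\eta^{(i)}$. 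Actually, the simplest honest estimate: since $\Phi_\nu$ solves the Hodge heat equation, $\p_t\theta^{(i)}=\Lambda\p\bar\p\eta^{(i)}$, and one can bound $\|\theta^{(i)}\|(x,t)\le\int_0^t\|\Lambda\p\bar\p\eta^{(i)}\|(x,\tau)\,d\tau$; applying Cauchy–Schwarz in $\tau$, $\|\theta^{(i)}\|^2(x,t)\le t\int_0^t\|\Lambda\p\bar\p\eta^{(i)}\|^2(x,\tau)\,d\tau$, so it suffices to bound $\int_0^T\aint_{B_o(R)}\|\Lambda\p\bar\p\eta^{(i)}\|^2$.

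So the heart of the matter is the spatial integral bound $\int_0^T\aint_{B_o(R)}\|\Lambda\p\bar\p\eta^{(i)}\|^2\le C_i$. For this I would use a Bochner-type argument exactly parallel to \eqref{pre-2} in the proof of Proposition \ref{prop-pre}. By Lemma \ref{lma-pre-1}(1) applied to $\eta^{(i)}$ (which satisfies $\partial_t\eta^{(i)}-\Delta\eta^{(i)}=0$, so $\xi=0$), we have $\heatt\|\eta^{(i)}\|^2\le -2\|\nabla\eta^{(i)}\|^2$; multiplying by a standard spacetime cutoff $\psi^2$ supported in $B_o(2R)\times[0,T]$, integrating by parts, and using that $\|\eta^{(i)}\|\le v^{(i)}\le v$ is uniformly bounded for each fixed $i$ (Lemma \ref{lma-pre-2}), one gets
\[
\int_0^T\aint_{B_o(R)}\|\nabla\eta^{(i)}\|^2\,d\mu\,dt\le C_i(1+R^{-2}T)\le C_i,
\]
with $C_i$ depending on $\sup\|\eta^{(i)}\|$ hence on $i$. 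Since $\Lambda\p\bar\p\eta^{(i)}$ is a universal algebraic contraction of $\nabla^2\eta^{(i)}$, this is not yet enough — we need $\nabla^2$. To close this I would instead bound $\int_0^T\aint_{B_o(R)}\|\nabla^2\eta^{(i)}\|^2$ directly: iterate the Bochner inequality one more time, i.e. use $\heatt\|\nabla\eta^{(i)}\|^2\le -2\|\nabla^2\eta^{(i)}\|^2+(\text{curvature})\cdot\|\nabla\eta^{(i)}\|^2$ (the curvature term is bounded on the fixed relatively compact region, or controlled using nonnegativity as in the derivation of \eqref{pre-3}), multiply by $\psi^2$, integrate by parts, and feed in the already-established $L^2$ bound on $\|\nabla\eta^{(i)}\|$. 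This yields $\int_0^T\aint_{B_o(R)}\|\nabla^2\eta^{(i)}\|^2\le C_i$ for all $R$, and since $\|\Lambda\p\bar\p\eta^{(i)}\|\le C\|\nabla^2\eta^{(i)}\|$ pointwise, combining with the Cauchy–Schwarz step above gives \eqref{eq-theta-1}.

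I expect the main obstacle to be the second-derivative estimate: getting $\int_0^T\aint_{B_o(R)}\|\nabla^2\eta^{(i)}\|^2\le C_i$ requires a local parabolic energy estimate (two applications of Bochner plus two integrations by parts) and care with the curvature terms that appear when commuting covariant derivatives — on a general \K manifold with only nonnegative Ricci and nonnegative quadratic orthogonal bisectional curvature, the relevant curvature contractions in the $\nabla^2$-Bochner formula are not a priori signed, so one must either absorb them using the local boundedness of curvature on $B_o(2R)$ (allowing $C_i$ and now also $C=C(R,i)$), or — if one wants $C_i$ independent of $R$ as stated — exploit a more delicate cancellation. Since the lemma as stated only demands $C_i$ (depending on $i$, not on $R$), and $\|\eta^{(i)}\|$ is globally bounded for fixed $i$, I would organize the cutoff argument so that the $R^{-2}T$ terms and the curvature terms are uniformly controlled, using that the bound $\|\eta^{(i)}\|\le v^{(i)}(x,t)$ together with $v^{(i)}\le \sup_M\|\rho^{(i)}\|<\infty$ gives a genuinely $R$-independent sup bound to feed into every boundary term. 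The rest is routine bookkeeping of integration-by-parts error terms.
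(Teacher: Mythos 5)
Your final plan does not close, and the step you yourself flag as ``the main obstacle'' is a genuine gap, not routine bookkeeping. You reduce the lemma, via Cauchy--Schwarz in $\tau$, to the bound $\int_0^T\aint_{B_o(R)}\|\nabla^2\eta^{(i)}\|^2\le C_i$ with $C_i$ independent of $R$, and propose to obtain it by iterating the Bochner inequality on $\|\nabla\eta^{(i)}\|^2$. But the curvature terms appearing in that second iteration have no sign under the standing hypotheses: nonnegative Ricci and nonnegative quadratic orthogonal bisectional curvature give a favorable sign only in the Bochner formulae for $\|\eta^{(i)}\|^2$, for $\|\sigma\|^2$ with $\sigma$ a $(1,0)$-form, and for the Hessian of a \emph{function}. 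Absorbing the unsigned terms by local boundedness of curvature on $B_o(2R)$ produces a constant depending on $R$, which is fatal here: the lemma is used precisely by dividing by $R^2V_o(R)$ and letting $R\to\infty$ in the maximum principle of Lemma \ref{lma-Li-Tam}, so an $R$-dependent constant yields nothing. You gesture at ``a more delicate cancellation'' but do not supply one, and none is available along this route.

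The fix is exactly the alternative you mention in passing and then discard. The paper never estimates $\nabla^2\eta^{(i)}$. Instead it applies the K\"ahler identity \eqref{eq-adjoint-1} to write
$$\Lambda\p\bar\p\eta^{(i)}=\ii\Delta_{\bar\p}\eta^{(i)}-\bar\p\Lambda\p\eta^{(i)}+\p\Lambda\bar\p\eta^{(i)}-\p\bar\p\Lambda\eta^{(i)},$$
and bounds $\theta^{(i)}$ term by term after integrating in time: $\int_0^t\ii\Delta_{\bar\p}\eta^{(i)}\,d\tau$ is (up to a constant) $\int_0^t\eta^{(i)}_\tau\,d\tau$, hence pointwise bounded by $v^{(i)}+\|\rho^{(i)}\|$; the middle terms are \emph{first} derivatives of the uniformly bounded forms $\sigma^{(i)}_j$ of Lemma \ref{lma-global-1}, whose space--time $L^2$ norms are controlled by one application of the signed Bochner inequality of Lemma \ref{lma-pre-1} plus the standard cutoff integration by parts; and the last term is $\p\bar\p$ of the bounded scalar solution $w^{(i)}=\Lambda\eta^{(i)}$ of the heat equation, for which the \emph{function} Bochner formula (needing only $\operatorname{Ric}\ge 0$) can be iterated once to give $\int_0^T\aint_{B_o(R)}\|\nabla^2w^{(i)}\|^2\le C_i$ uniformly in $R$. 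Each piece thus only ever requires a Bochner inequality whose curvature term is nonnegative under the actual hypotheses. Your outline identifies all the ingredients but commits to the one decomposition for which the requisite second-derivative estimate is not available.
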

\begin{proof} Let $w^{(i)}(x, t)=\Lambda \eta^{(i)}(x, t)$. By Lemma \ref{lma-pre-2},   $w^{(i)}(x, t)$ is uniformly bounded. It also satisfies the heat equation.
Using the differential equation/inequality
\begin{eqnarray*}
\heat \left(w^{(i)}\right)^2(x, t)&=&2|\nabla w^{(i)}|^2(x, t),\\
\heat |\nabla w^{(i)}|^2(x, t) &\ge& 2\|\nabla^2 w^{(i)}\|^2(x, t)
\end{eqnarray*}
integration by parts as in the proof of Proposition \ref{prop-pre}, for $R^2\ge T$
\begin{equation}\label{eq:57}
\int_0^T \aint_{B_o(R)}\left(|\nabla w^{(i)}|^2(x, t)+\|\nabla^2 w^{(i)}\|^2(x, t)\right)\, d\mu(x)\, dt\le C_i.
\end{equation}
for some $C_i>0$ independent of $R$.
Let $\sigma^{(i)}_1(x, t)=\Lambda \bar{\p} \eta^{(i)}$  and $\sigma_2^{(i)}(x, t)=\Lambda \p \eta^{(i)}(x, t)$.
By Lemma \ref{lma-adjoint1},
\begin{equation}\label{eq:58}\theta^{(i)}(x, t)=
\int_0^t \p\bar{\p}w^{(i)}(x, \tau)d\tau +\int_0^t\sqrt{-1}\Delta_{\bar{\p}} \eta^{(i)}(x, \tau) +\p \sigma^{(i)}_1(x, \tau) -\bar{\p} \sigma_2^{(i)}(x, \tau) d\tau.
\end{equation}
Note 
\begin{equation}\label{eq:55}
\|\int_0^t\sqrt{-1}\Delta_{\bar{\p}} \eta^{(i)}(x, \tau)\, d\tau \|=\|\int_0^t\ii \eta^{(i)}_\tau d\tau\|\le v^{(i)}(x, t)+\|\rho^{(i)}\|(x).
\end{equation}
On the other hand  by Lemma \ref{lma-pre-1}
\begin{equation}\label{eq:51}
\heat \|\sigma^{(i)}_j\|^2(x, t)\ge 2\|\nabla \sigma^{(i)}_j\|^2(x, t), \mbox{ for } j=1, 2,
\end{equation}
and  $\|\sigma^{(i)}_j\|^2$ are uniformly bounded by Lemma \ref{lma-global-1}.
 Integrating by parts after multiplying (\ref{eq:51}) by a cut-off function as before we have
\begin{equation}\label{eq:56}
\int_0^T \aint_{B_o(R)} \|\nabla \sigma^{(i)}_1\|^2(x, t) +\|\nabla \sigma^{(i)}_2\|^2(x, t)\, d\mu(x)\, dt \le C_i.
\end{equation}
The result follows from (\ref{eq:57}), (\ref{eq:58}), (\ref{eq:55})and (\ref{eq:56}).
\end{proof}

 Lemma \ref{help-lma1} allows us to apply the maximum principle to $\|\theta^{(i)}\|(x, t)$ once we can establish that $\|\theta^{(i)}\|(x, t)$ is a subsolution of a heat type equation. To this end we have the following lemma.

\begin{lma}\label{lma-help2} For any $\e>0$,
\be
\heat\lf(||\theta^{(i)}||(x, t)+\e\ri)^\frac12\ge  -||\Lambda\p\bar\p\rho^{(i)}||(x).
\ee
\end{lma}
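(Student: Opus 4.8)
The plan is to show that $\|\theta^{(i)}\|$, suitably regularized, is a subsolution of the heat equation with a controlled inhomogeneous term coming from the non-closedness of $\rho^{(i)}$. First I would record the evolution equation for $\theta^{(i)}$ itself. Since $\theta^{(i)}(x,t) = \int_0^t \Lambda\p\bar\p\eta^{(i)}(x,\tau)\,d\tau$, differentiating in $t$ gives $\p_t\theta^{(i)} = \Lambda\p\bar\p\eta^{(i)}(x,t)$. The key identity is that $\Lambda$ commutes with $\Delta = \Delta_d$ on a \K manifold (both are built from the \K structure and $\Lambda$ is parallel), so $\Delta\theta^{(i)} = \int_0^t \Lambda\p\bar\p\Delta\eta^{(i)}\,d\tau = \int_0^t \Lambda\p\bar\p \eta^{(i)}_\tau\,d\tau$. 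Integrating by parts in $\tau$ (all quantities are smooth, $\eta^{(i)}(\cdot,0)=\rho^{(i)}$), this equals $\Lambda\p\bar\p\eta^{(i)}(x,t) - \Lambda\p\bar\p\rho^{(i)}(x)$. Hence $\heat \theta^{(i)} = \Delta\theta^{(i)} - \p_t\theta^{(i)} = -\Lambda\p\bar\p\rho^{(i)}(x)$, i.e.\ $\theta^{(i)}$ solves the heat equation up to the fixed source term $-\Lambda\p\bar\p\rho^{(i)}$.

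Next I would feed this into a Bochner-type inequality for the norm of a $(1,1)$-form (or function, depending on the exact type of $\theta^{(i)}$; $\Lambda\p\bar\p\eta$ of a $(1,1)$-form is a function, so $\theta^{(i)}$ is a function). Actually, since $\theta^{(i)}$ is simply a real-valued function, the computation is even easier: for a function $\psi$ solving $\p_t\psi - \Delta\psi = \xi$ with $\xi$ independent of $t$, one has $\heat(\psi^2+\e)^{1/2} = \frac{1}{(\psi^2+\e)^{1/2}}\big(\psi\,\heat\psi + |\nabla\psi|^2 - \frac{\psi^2|\nabla\psi|^2}{\psi^2+\e}\big) = \frac{1}{(\psi^2+\e)^{1/2}}\big(\psi\xi + \frac{\e}{\psi^2+\e}|\nabla\psi|^2\big) \ge \frac{\psi\xi}{(\psi^2+\e)^{1/2}} \ge -|\xi|$. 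If instead $\theta^{(i)}$ is genuinely a $(p,q)$-form (because one keeps $\Lambda\p\bar\p\eta$ at the form level before the final contraction — the notation in the excerpt suggests $\theta^{(i)}$ may be form-valued), then I would invoke Lemma \ref{lma-pre-1}(1), which already packages the Kodaira--Bochner estimate under the nonnegative quadratic orthogonal bisectional curvature hypothesis: $\heatt(\|\theta^{(i)}\|^2+\e)^{1/2} \le \|\xi\|$, which upon flipping the sign of the operator gives exactly $\heat(\|\theta^{(i)}\|^2+\e)^{1/2} \ge -\|\xi\|$ with $\xi = -\Lambda\p\bar\p\rho^{(i)}$. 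Either way, combining with the computation of $\heat\theta^{(i)}$ from the first step yields the claimed inequality.

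The main obstacle — and the step requiring the most care — is the commutation $\Delta\theta^{(i)} = \int_0^t \Lambda\p\bar\p\Delta\eta^{(i)}\,d\tau$ together with justifying differentiation under the integral sign and the $\tau$-integration by parts. One needs $\eta^{(i)}$ to be smooth up to $t=0$ (which holds here: $\eta^{(i)}$ is a smooth limit of the $\Phi_\nu$, and $\rho^{(i)}$ is smooth and compactly supported), and one needs the commutativity of $\Lambda$, $\p$, $\bar\p$, $\Delta_{\bar\p}$ in the combination $\Lambda\p\bar\p$ with the Hodge Laplacian, which on a \K manifold follows from the \K identities (the same identities underlying Lemma \ref{lma-adjoint1}). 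Given that $\eta^{(i)}$ for fixed $i$ has uniformly bounded norm (Lemma \ref{lma-pre-2}) and all its covariant derivatives are locally bounded by Schauder estimates, the interchange of $\Delta$, $\p_t$, and $\int_0^t$ is legitimate, so this is a routine but essential verification rather than a genuine difficulty. Once it is in place, the Bochner inequality from Lemma \ref{lma-pre-1} finishes the proof immediately.
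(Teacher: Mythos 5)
Your argument is correct and is essentially the paper's proof: you compute that $\theta^{(i)}$ satisfies the heat equation with the time-independent source term $\Lambda\p\bar\p\rho^{(i)}$ (using that $\Delta$ commutes with $\Lambda$, $\p$, $\bar\p$ on a K\"ahler manifold, and the fundamental theorem of calculus in $\tau$ rather than integration by parts), and then apply the Bochner inequality of Lemma \ref{lma-pre-1}(1). One small correction: $\Lambda\p\bar\p\eta^{(i)}$ is a $(1,1)$-form, not a function, since $\Lambda$ lowers the bidegree only by $(1,1)$ and $\p\bar\p\eta^{(i)}$ has type $(2,2)$; so your second, form-valued alternative is the operative case, which you do handle correctly via Lemma \ref{lma-pre-1}.
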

\begin{proof} First direct calculation shows that
\begin{eqnarray*}
\frac{\p}{\p t}\theta^{(i)}(x, t)-\Delta \theta^{(i)}(x, t)&=&\Lambda\p\bar\p\eta^{(i)}(x, t)-\int_0^t\Delta \Lambda\p\bar\p\eta^{(i)}(x, \tau)\, d\tau\\
&=&\Lambda\p\bar\p\eta^{(i)}(x, t)-\int_0^t\Lambda\p\bar\p\Delta\eta^{(i)}(x, \tau)\, d\tau\\
&=&\Lambda\p\bar\p\eta^{(i)}(x, t)-\int_0^t\Lambda\p\bar\p (\eta^{(i)})_t(x, \tau)\, d\tau\\
&=&\Lambda\p\bar\p\rho^{(i)}(x).
\end{eqnarray*}
The claimed inequality follows from (\ref{eq-11-form special}) and Lemma \ref{lma-pre-1}.
\end{proof}

\begin{lma}\label{lma-global-2} $\theta(x,t)=0$ for all $x, t$.
\end{lma}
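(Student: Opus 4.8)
The plan is to dominate $\|\theta^{(i)}\|$ by the Duhamel potential of the small, compactly supported forcing term $\Lambda\p\bar\p\rho^{(i)}$, and then pass to the limit $i\to\infty$ using that $\eta^{(i)}\to\eta$ locally smoothly (Lemma \ref{lma-pre-2}). First I would introduce
$$
z^{(i)}(x,t):=\int_0^t\int_M H(x,y,t-s)\,\|\Lambda\p\bar\p\rho^{(i)}\|(y)\,d\mu(y)\,ds,
$$
the nonnegative solution of $\heatt z^{(i)}=\|\Lambda\p\bar\p\rho^{(i)}\|(x)$ with $z^{(i)}(\cdot,0)=0$. By \eqref{eq-pre-2} one has $\|\Lambda\p\bar\p\rho^{(i)}\|\le \frac{C}{R_i}\|\rho\|$ for a constant $C$, whence $z^{(i)}(x,t)\le \frac{C}{R_i}\int_0^t v(x,s)\,ds$, where $v(x,t)=\int_M H(x,y,t)\|\rho\|(y)\,d\mu(y)$ is the (well-defined, under the hypotheses of Theorem \ref{thm-heat-form-2}) heat evolution of $\|\rho\|$. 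Since $\int_0^t v(x,s)\,ds$ is finite and continuous on $M\times[0,\infty)$, $z^{(i)}\to0$ as $i\to\infty$, uniformly on compact subsets.

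The main step is the pointwise comparison $\|\theta^{(i)}\|(x,t)\le z^{(i)}(x,t)$ for each fixed $i$. Here I would fix $i$ and $\e>0$ and use Lemma \ref{lma-help2}, which gives $\heatt(\|\theta^{(i)}\|^2+\e)^{1/2}\le\|\Lambda\p\bar\p\rho^{(i)}\|(x)=\heatt z^{(i)}$, so that
$$
g_\e:=(\|\theta^{(i)}\|^2+\e)^{1/2}-z^{(i)}-\e^{1/2}
$$
is a smooth subsolution of the heat equation vanishing at $t=0$. To invoke the maximum principle one has to pass to a smoothed positive part: composing $g_\e$ with a smooth, convex, nondecreasing $\Phi_\delta$ approximating $s\mapsto s_+$ (say $\Phi_\delta(s)=\frac12(s+\sqrt{s^2+\delta})$) still yields a smooth nonnegative subsolution, since $\heatt\Phi_\delta(g_\e)=\Phi_\delta'(g_\e)\heatt g_\e-\Phi_\delta''(g_\e)|\nabla g_\e|^2\le0$. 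Its initial value is $\Phi_\delta(0)=\frac12\sqrt\delta$, and since $\Phi_\delta(g_\e)\le(\|\theta^{(i)}\|^2+\e)^{1/2}+\frac12\sqrt\delta$ (using $z^{(i)}\ge0$), Lemma \ref{help-lma1} shows it satisfies the decay hypothesis \eqref{eq-subsolution} with $p=2$. Lemma \ref{lma-Li-Tam} then forces $\Phi_\delta(g_\e)(x,t)\le\frac12\sqrt\delta$; letting $\delta\to0$ gives $g_\e\le0$, i.e. $(\|\theta^{(i)}\|^2+\e)^{1/2}\le z^{(i)}+\e^{1/2}$, and letting $\e\to0$ gives $\|\theta^{(i)}\|\le z^{(i)}$.

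To finish, since $\eta^{(i)}\to\eta$ in $C^\infty_{\mathrm{loc}}$, we have $\theta^{(i)}(x,t)=\int_0^t\Lambda\p\bar\p\eta^{(i)}(x,\tau)\,d\tau\to\theta(x,t)$ for every $(x,t)$, hence $\|\theta\|(x,t)=\lim_i\|\theta^{(i)}\|(x,t)\le\lim_i z^{(i)}(x,t)=0$, so $\theta\equiv0$.

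The hard part is the maximum principle step. The function $g_\e$ is sign-changing and a priori unbounded above, since the only control on $\|\theta^{(i)}\|$ is the $L^2$-averaged bound of Lemma \ref{help-lma1}, and that bound carries an $i$-dependent constant; thus Lemma \ref{lma-Li-Tam} cannot be applied to $g_\e$ directly. Replacing $g_\e$ by the convex composition $\Phi_\delta(g_\e)$ repairs both the regularity and the sign, and the $i$-dependence of the constant is immaterial because $i$ is held fixed throughout this step; the limit $i\to\infty$ is taken only afterwards, on the already-established inequality $\|\theta^{(i)}\|\le z^{(i)}$, where the uniform-on-compacts decay of $z^{(i)}$—a consequence of \eqref{eq-pre-2} and the well-definedness of $v$—completes the argument.
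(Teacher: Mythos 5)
Your proof is correct and follows essentially the same route as the paper: dominate $\|\theta^{(i)}\|$ by the potential $z^{(i)}$ of the forcing term $\|\Lambda\p\bar\p\rho^{(i)}\|$ via Lemmas \ref{help-lma1}, \ref{lma-help2} and the maximum principle Lemma \ref{lma-Li-Tam}, then let $i\to\infty$ using \eqref{eq-pre-2}. Your smoothed-positive-part construction merely spells out the application of Lemma \ref{lma-Li-Tam} to the difference $(\|\theta^{(i)}\|^2+\e)^{1/2}-z^{(i)}$, which the paper leaves implicit.
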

 \begin{proof} Let
 $$
 z^{(i)}(x, t)=\int_0^t \int_M H(x, y, \tau) ||\Lambda\p\bar\p\rho^{(i)}||(y)\, d\mu(y)\, d\tau.
 $$
It is easy to see that
$$
\heat z^{(i)}(x, t)= -||\Lambda\p\bar\p\rho^{(i)}||(x).
$$
Note that $z^{(i)}$ is bounded. By Lemmas \ref{help-lma1}, \ref{lma-help2}  and the maximum principle Lemma \ref{lma-Li-Tam}, we conclude that
\begin{eqnarray*}
\|\theta^{(i)}\|(x, t) &\le& z^{(i)}(x, t)\\
&\le& \frac{C_1}{R_i}\int_0^t \int_M H(x, y, \tau) \|\rho\|(y)\, d\mu(y)\, d\tau\\
 &\le& \frac{C_1}{R_i}\int_0^t v(x, \tau)\, d\tau\\
 &\to & 0
\end{eqnarray*}
as $i\to \infty$. Here we have used \eqref{eq-pre-2}.
 \end{proof}

 By the definition of $\theta$, we conclude that  $\Lambda \p\bar{\p}\eta =0$. The proof of Theorem \ref{thm-heat-form-2} is completed.

\section{An alternate proof of the gap theorem}

In this section we shall give an alternate proof of the gap theorem of \cite{PL-remove}. The proof here avoids the use of the {\it relative monotonicity}. It uses a Li-Yau-Hamilton type estimate in \cite{NN} together with Lemma \ref{lma-global-1}. The proof also avoids to solve the Poincar\'e-Lelong equation.

\begin{thm}\label{gap1}  Let $(M, g)$ be a complete K\"ahler manifold with nonnegative bisectional curvature. Assume that $\rho\ge 0$ is a $d$-closed $(1,1)$-form.
Suppose that
\begin{equation}\label{ave-decay1}
\int_0^r s\aint_{B_{o}(s)}\|\rho\|(y)\, d\mu(y)\, ds =o(\log r)
\end{equation}
for some $o\in M$. Then $\rho\equiv 0$.
\end{thm}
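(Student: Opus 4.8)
The plan is to exploit the machinery built in Sections 2–4, together with a Li–Yau–Hamilton (LYH) type inequality, to directly control the trace $\Lambda\eta$ along the Hodge-Laplace heat flow, rather than passing through the Poincaré–Lelong equation. First I would note that under the hypothesis \eqref{ave-decay1}, in particular $k_f(r)=o(\log r)$, so $f=\|\rho\|$ satisfies the (much weaker) growth bound \eqref{f-decay}; thus Theorem \ref{thm-heat-form-2} applies and produces a $d$-closed $(1,1)$-form $\eta(x,t)$ solving \eqref{eq-heat-form-3} with $\eta(x,0)=\rho$, satisfying the decay \eqref{eq-eta-decay} and, since $k_f(r)\to 0$ as well (because $o(\log r)$ for the integrated quantity forces $k_f(r)\to 0$), also $\lim_{t\to\infty}\eta(x,t)=0$. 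Set $\phi(x,t)=\Lambda\eta(x,t)=\tr(\eta)(x,t)$: by Lemma \ref{lma-global-1} (more precisely by the closedness of $\eta$ and Lemma \ref{lma-adjoint1}) $\phi$ satisfies the scalar heat equation with initial value $\tr(\rho)\ge 0$, hence $\phi\ge 0$; moreover $\phi(x,t)=\int_M H(x,y,t)\tr(\rho)(y)\,d\mu(y)$ and $\phi(x,t)\le v(x,t)=\int_M H(x,y,t)\|\rho\|(y)\,d\mu(y)$.

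The heart of the argument is to show $\phi\equiv 0$. Since $M$ has nonnegative bisectional curvature, the positive solution $\phi$ of the heat equation obeys the LYH/matrix Li–Yau–Hamilton estimate of \cite{NN}; in scalar form this gives, for the time-integrated quantity, a monotonicity or Harnack-type control of the form $t\phi(x,t)$ is essentially monotone (up to the Laplacian comparison geometry), and in particular yields a lower bound
\[
\phi(x,t)\ \ge\ \frac{c}{t}\int_0^t \phi(x,\tau)\,d\tau
\]
after integrating the differential Harnack inequality, for a dimensional constant $c>0$ and $x$ fixed. On the other hand, the sharp upper bound for heat solutions with the growth \eqref{ave-decay1}: using $\phi(x,t)\le v(x,t)$ and the estimate of \cite[Lemma 2.1]{NT-2003} / Lemma \ref{lma-Ni-Tam} with $p=1$ and $r^2\sim t$, one gets $\phi(x,t)\le C\bigl(k_f(\sqrt t)+t^{-1}\int_{\sqrt t}^\infty s e^{-s^2/20t}k_f(s)\,ds\bigr)$, which under $k_f(r)=o(\log r)$ decays like $o\bigl(\frac{\log t}{t}\bigr)$ as $t\to\infty$ — hence $\int_0^\infty\phi(x,\tau)\,d\tau$ could a priori diverge. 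The correct bookkeeping is instead: from the growth assumption $\int_0^r s\,k_f(s)\,ds=o(\log r)$ together with the averaging estimate, the function $\int_0^t\phi(x,\tau)\,d\tau$ grows at most like $o(\log t)$, while the Harnack lower bound forces $\phi(x,t)\gtrsim t^{-1}\int_0^t\phi(x,\tau)d\tau$; combined with $\lim_{t\to\infty}\eta(x,t)=0$ (so $\phi(x,t)\to 0$) and a standard ODE comparison for $\psi(t)=\int_0^t\phi d\tau$ satisfying $\psi'(t)\ge c\psi(t)/t$ yet $\psi(t)=o(\log t)$, one concludes $\psi\equiv 0$, hence $\phi\equiv 0$, hence $\tr(\rho)\equiv 0$. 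Finally, a nonnegative $(1,1)$-form $\rho\ge 0$ with $\tr(\rho)=\Lambda\rho=0$ is identically zero pointwise, so $\rho\equiv 0$.

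The main obstacle I anticipate is making the interplay between the Harnack lower bound from \cite{NN} and the sublogarithmic growth of the time-integral rigorous: one must verify that the LYH inequality applies to $\phi$ (which requires only nonnegative bisectional curvature and that $\phi$ is a bounded-below-by-zero heat solution with controlled growth, guaranteed by $\phi\le v$ and \eqref{f-decay}), and then extract from it precisely the scalar differential Harnack $\partial_t(t^n\phi)\ge 0$ or the integrated version $\phi(x,t_2)\ge (t_1/t_2)^{n}\phi(x,t_1)$ type bound; the delicate point is that one needs a bound going the "right way" in $t$ to contradict the decay $\phi(x,t)\to 0$ unless $\phi\equiv 0$. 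An alternative, perhaps cleaner, route that avoids this tension is: use Lemma \ref{lma-global-1} to note $\Lambda\partial\eta=\Lambda\bar\partial\eta=0$ and $\Lambda\partial\bar\partial\eta=0$ (Lemma \ref{lma-global-2}), so that $w(x,t):=-2\int_0^t\phi(x,\tau)d\tau$ yields $\eta+\sqrt{-1}\partial\bar\partial w=\rho$ with $\eta\to 0$; then $\sqrt{-1}\partial\bar\partial(-w)=\rho-\eta\to\rho$, and $-w(x,t)=2\int_0^t\phi\,d\tau$ is a nonnegative increasing function whose complex Hessian converges to $\rho\ge 0$. If $-w$ itself has controlled growth — here is where \eqref{ave-decay1} enters, via Lemma \ref{lma-Ni-Tam} giving $k_{-w}(r)=o(\log r)$ after the double integration — then the function $\lim_t(-w(x,t))$ (if it exists) is a function of sublogarithmic growth with $\sqrt{-1}\partial\bar\partial(-w)=\rho$; applying the Liouville-type theorem for subharmonic functions of $o(\log r)$ growth on manifolds with nonnegative Ricci curvature (which forces such a function, if subharmonic — and $\tr(\rho)\ge 0$ makes $-w$ subharmonic in the limit — to be constant) gives $\rho=\sqrt{-1}\partial\bar\partial(\text{const})=0$. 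I would present the LYH-based argument as the primary one since the excerpt explicitly advertises "a Li-Yau-Hamilton type estimate in \cite{NN} together with Lemma \ref{lma-global-1}," and relegate the growth-estimate bookkeeping — the genuinely technical step — to careful application of Lemma \ref{lma-Ni-Tam}.
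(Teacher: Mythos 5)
Your overall architecture is the paper's: run the Hodge--Laplace heat flow, use the closedness of $\eta$ and Lemma \ref{lma-global-1}, invoke the Li--Yau--Hamilton estimate of \cite{NN}, and contradict the sublogarithmic growth of $\int_0^t\Lambda\eta\,d\tau$ (which the paper also obtains, via $v(x,t)=2u(x)+\int_0^t w\,d\tau$ and the moment estimate of \cite[Theorem 3.1]{N02}). But there is a genuine gap at the one quantitative step on which everything hinges: you treat the Harnack input as the \emph{scalar} Li--Yau estimate for the positive heat solution $\phi=\Lambda\eta$, which gives $\partial_t(t^n\phi)\ge 0$, i.e.\ $\phi(x,t)\ge (t_0/t)^n\phi(x,t_0)$. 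For $n\ge 2$ this lower bound is integrable in $t$, so it produces \emph{no} contradiction with $\int_0^t\phi\,d\tau=o(\log t)$; the argument closes only if the exponent is exactly $1$. That sharp exponent is precisely what \cite[Theorem 1.1]{NN} provides for a \emph{nonnegative $(1,1)$-form} solution: $w_t+\tfrac12(\bar\p^*\Lambda\bar\p+\p^*\Lambda\p)\eta+\tfrac{w}{t}\ge 0$, and after Lemma \ref{lma-global-1} kills the middle terms ($\Lambda\p\eta=\Lambda\bar\p\eta=0$) one gets $w_t+w/t\ge 0$, hence $w(x,t)\ge t_0\delta/t$ and the logarithmic divergence. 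So you must apply the form-valued LYH to $\eta$ itself, not a scalar Harnack to its trace.

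Two further points. First, to invoke \cite{NN} you need $\eta\ge 0$ \emph{as a $(1,1)$-form}, not merely $\Lambda\eta\ge 0$; this is not automatic and is obtained in the paper by applying \cite[Theorem 2.1]{NT-2003} (preservation of nonnegativity under the flow) to the approximating solutions $\eta^{(i)}$ with nonnegative initial data $\rho^{(i)}$, then passing to the limit. Second, your intermediate inequality $\phi(x,t)\ge \frac{c}{t}\int_0^t\phi(x,\tau)\,d\tau$ does not follow from either version of the Harnack (the bound $\tau\phi(\tau)\le t\phi(t)$ cannot be integrated down to $\tau=0$); it is also unnecessary --- once $t\,w(x_0,t)$ is nondecreasing, the assumption $w(x_0,t_0)=\delta>0$ directly gives $\int_{t_0}^{T}w\,d\tau\ge t_0\delta\log(T/t_0)$, contradicting the $o(\log T)$ bound. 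With these repairs your primary argument coincides with the paper's proof; the secondary ``Liouville'' route you sketch is not what the paper does and would need a separate justification of the existence and growth of $\lim_{t\to\infty}(-w)$.
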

\begin{proof} By the assumption \eqref{ave-decay1}, we can apply Theorem \ref{thm-heat-form-2}. Let $\eta^{(i)}$ and $\eta$ as in the proof of Theorem \ref{thm-heat-form-2} with $||\eta^{(i)}||$ is bounded and

$$\|\eta\|(x, t)\le \int_M H(x, y, t) \|\rho\|(x)\, d\mu.
$$
Since $\rho_i$ is nonnegative, by \cite[Theorem 2.1]{NT-2003}, we can conclude that $\eta^{(i)}$ is nonnegative. Hence $\eta$ is nonnegative. By \cite[Theorem 1.1]{NN}, we have
\be\label{lyh-help1}
w_t+\frac12\lf(\bar\p^*\Lambda\bar\p + \p^*\Lambda \p\ri)\eta+\frac{w}t\ge0
\ee
where $w=\Lambda\eta$. By Lemma \ref{lma-global-1}, $\Lambda\bar\p\eta=\Lambda \p \eta=0$. Let $u(x)$ be the solution of $\Delta u=\Lambda\rho$ obtained in \cite{NST}, which satisfies the estimates \eqref{bound}. Let $v(x,t)$ be the solution of the heat equation with initial data $2u$. By the Proposition \ref{prop-solving u v} and the estimates of $u$ and $\eta$, we conclude that
$$
v(x,t)=2u(x)+\int_0^t w(x,\tau)d\tau.
$$
By Proposition \ref{prop-solving u v}, through the previous work \cite{NST},  for any $x_0\in M$ we have that
$$u(x)=o(\log (r(x_0, x)))$$
as $x\to \infty$, where $r(x_0, x)$ denotes the distance function as before.
By the moment type estimate (cf. Theorem 3.1 of \cite{N02})
we have that $v(x_0, t)=o(\log t)$ as $t\to \infty$. This fact, together with (\ref{lyh-help1}) implies that $\Lambda \rho(x_0, t)=0$ for any $t>0$. Otherwise assume that $\Lambda \rho(x_0, t_0)=\delta>0$ for some $t_0> 0$. Note that we have $v_t= 2\Lambda \eta =2w$. Then by (\ref{lyh-help1}) we have that  for all $t\ge t_0$,
$$
v_t(x, t)\ge \frac{2t_0\delta}{t},
$$
 which in particular implies that
 $$
 v(x_0, t)\ge 2t_0\delta \log \left(\frac{t}{t_0}\right)-v(x_0, t_0).
 $$
This contradicts  $v(x, t)=o(\log (t))$. The contradiction implies that $\Lambda \eta(x_0, t)=0$ for any $t\ge 0$. Hence we have that $\Lambda  \rho(x)\equiv 0$ which implies the claim $\rho\equiv 0$ by the nonnegativity of $\rho$.

\end{proof}

\section{Solution of Poincar\'e-Lelong equation}\label{s-initial boundary}
In this section we shall prove the result generalizing \cite[Theorem 6.1]{NT-2003}.
With the notations of previous sections we can state the main theorem.

\begin{thm}\label{thm-PL equation}
Let $(M^n,g)$ be a complete noncompact \K manifold (of complex dimension $n$) with nonnegative Ricci curvature and nonnegative quadratic orthogonal bisectional curvature.    Suppose that $\rho$ is a smooth $d$-closed   real $(1,1)$-form on $M$ and let $f=||\rho||$ be the norm of $\rho$.  Suppose that
\be\label{eq-rho decay}
\int_0^\infty k_f(r)dr<\infty.
\ee
Then there is a smooth function $u$ so that $\rho=\ii \p\bar\p u$. Moreover, for any $0<\e<1$, the estimate (\ref{bound}) holds.
\end{thm}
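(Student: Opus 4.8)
The plan is to assemble Theorem~\ref{thm-PL equation} out of the three ingredients already developed in the preceding sections, namely Theorem~\ref{thm-PL-general}, Proposition~\ref{prop-solving u v}, and Theorem~\ref{thm-heat-form-2}. First I would observe that the decay hypothesis \eqref{eq-rho decay}, i.e.\ $\int_0^\infty k_f(r)\,dr<\infty$, is strong enough to feed into all three. Indeed $\int_0^\infty k_f(r)\,dr<\infty$ forces $k_f(r)\to 0$ along a sequence $r_i\to\infty$, and by the monotonicity/averaging argument standard in this circle of ideas it forces $k_f(r)=o(1)$ as $r\to\infty$; in particular $\limsup_{R\to\infty}k_f(R)/R^2=0$, so the hypothesis \eqref{f-decay} of Theorem~\ref{thm-heat-form-2} is met, and moreover $\lim_{R\to\infty}k_f(R)=0$, so part (b) of that theorem applies as well.

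Next I would apply Proposition~\ref{prop-solving u v} with $f=\tr(\rho)=\Lambda\rho$ (note $|\tr(\rho)|\le C(n)\|\rho\|=C(n)f$, so the integral hypothesis $\int_0^\infty k_{\tr(\rho)}(r)\,dr<\infty$ holds): this produces a function $u$ with $\Delta u=\tr(\rho)$ satisfying the growth bound $k_u(r)=o(r^2)$ together with the estimate \eqref{bound}, and a solution $v(x,t)$ of the scalar heat equation \eqref{eq-heat-scalar} with $v(x,0)=2u(x)$ satisfying the decay \eqref{eq-decay-scalar} for $p=1$ and $\lim_{t\to\infty}\p\bar\p v=0$. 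Thus condition (b) of Theorem~\ref{thm-PL-general} is verified with $p=1$. Simultaneously, Theorem~\ref{thm-heat-form-2} supplies a $d$-closed $(1,1)$-form $\eta(x,t)$ solving the Hodge-Laplace heat equation \eqref{eq-heat-form-11} with $\eta(x,0)=\rho$, satisfying the decay \eqref{eq-eta-decay} (which is exactly \eqref{eq-decay-form} with $p=1$) and, by part (b), $\lim_{t\to\infty}\eta(x,t)=0$. Hence condition (a) of Theorem~\ref{thm-PL-general} holds, again with $p=1$.

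With both hypotheses (a) and (b) of Theorem~\ref{thm-PL-general} in hand, that theorem yields $2\ii\p\bar\p u=\rho$; replacing $u$ by $u/2$ (which only changes the constants $\alpha_i,\beta_i$ in \eqref{bound} by a fixed factor, so the form of the estimate is preserved, $\tilde u = u/2$ still satisfying a bound of the shape \eqref{bound}) gives a smooth function, still denoted $u$, with $\rho=\ii\p\bar\p u$. The estimate \eqref{bound} for $u$ is inherited from the estimate for the solution produced in Proposition~\ref{prop-solving u v}, which was quoted from \cite[Lemma 6.1]{NT-2003} and \cite{NST}; here one uses that $k(x,s)$ and $k(s)$ in \eqref{bound} are the averages of $\|\rho\|=f$, matching the $k_f$ appearing throughout, and that the reduction to the nonparabolic case via $\widetilde M=M\times\R^4$ in Proposition~\ref{prop-solving u v} does not affect these averages over balls in $M$.

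The one place that needs genuine care — and which I expect to be the main (though minor) obstacle — is the passage from $\int_0^\infty k_f(r)\,dr<\infty$ to the qualitative statement $k_f(r)\to 0$, and more importantly the verification that all the quoted results ($\cite[\text{Lemma }2.1]{NT-2003}$ for well-definedness of $v(x,t)=\int_M H(x,y,t)f(y)\,d\mu(y)$, $\cite[\text{Theorem }1.1]{LiTam1991}$ for $v(x,t)\to0$, and Lemma~\ref{lma-Ni-Tam} for the averaged decay) are being invoked under exactly their stated hypotheses. This is essentially bookkeeping: $k_f$ nondecreasing-up-to-volume-comparison plus the finite integral gives sublinear, indeed $o(1)$, decay, and the Bishop-Gromov volume comparison (valid since $\Ric\ge0$) makes the hypotheses of the Li--Tam results automatic. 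Once this is checked, the proof is simply the concatenation "$\eqref{eq-rho decay}\Rightarrow$ hypotheses of Thm~\ref{thm-heat-form-2} and Prop~\ref{prop-solving u v} $\Rightarrow$ hypotheses (a),(b) of Thm~\ref{thm-PL-general} $\Rightarrow$ $\rho=\ii\p\bar\p u$ with \eqref{bound}."
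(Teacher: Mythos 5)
Your proposal is correct and follows essentially the same route as the paper, which likewise observes that volume comparison gives $k_f(r)\le 2^{2n}k_f(s)$ for $s\in(r,2r)$, so $\int_0^\infty k_f(r)\,dr<\infty$ forces $r\,k_f(r)\to 0$ (hence both \eqref{f-decay} and $k_f(R)\to 0$), and then concatenates Theorem \ref{thm-PL-general}, Theorem \ref{thm-heat-form-2} and Proposition \ref{prop-solving u v}. The only slip is the direction of the final rescaling: since Theorem \ref{thm-PL-general} yields $2\ii\p\bar\p u=\rho$, you should replace $u$ by $2u$ rather than $u/2$ to get $\rho=\ii\p\bar\p u$.
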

\begin{proof} Observe that the volume comparison implies  $k_f(r)\le 2^{2n} k_f(s)$ for all $s\in (r, 2r)$. Thus (\ref{eq-rho decay}) implies that $\lim_{r\to \infty} k_f(r)\, r = 0$.  Thus Theorem \ref{thm-PL equation} follows  from Theorems \ref{thm-PL-general}, \ref{thm-heat-form-2} and Proposition \ref{prop-solving u v}.
\end{proof}

 Finally we make some comments on the relation of conditions of {\it nonnegative bisectional curvature} (NB), {\it nonnegative orthogonal bisectional curvature} (NOB), {\it nonnegative quadratic orthogonal bisectional curvature} (NQOB) (\ref{assumption-qu-or}), and the following curvature condition on $(2,1)$-forms $\sigma$:
   \begin{equation}\label{eq-curvature assumption}
-\langle E(\sigma), \sigma\rangle \ge -a^2 |\sigma|^2
\end{equation}
 for some $a>0$, where
\begin{equation}\label{eq-BL-formula-2}
  \lf(E(\sigma)\ri)_{ij\bar k} =- R_i^l \sigma_{lj\bar k}-R_j^l\sigma_{il\bar k}-R_{\bar k}^{\bar l}\sigma_{ij\bar l}+2R_{i\ \ \bar k}^{\  l\bar m\ }  \sigma_{lj\bar m}+2R_{j\ \ \bar k}^{\ l\bar m\ }  \sigma_{il\bar m}.
\end{equation}
  For 
   condition (\ref{eq-curvature assumption}) with $a=0$,  we abbreviate  as (NCF) (nonnegativity associated with certain $3$-forms), as well as an invariant representation of them. This condition on $(2,1)$-forms arises from the Bochner formula on $(2,1)$-forms, which is related to a previous method of constructing global $d$-closed solutions to Hodge-Laplace heat equation.  Algebraically  (NB) is stronger than (NOB), which in turn is stronger than (NQOB). We introduce some notations for  convenience. First the curvature operator of a K\"ahler manifold can be viewed as bilinear form on $\mathfrak{gl}(n, \mathbb{C})$ (which can be identified with $\wedge^{1, 1}(\mathbb{C}^n)$ via the metric) in the sense that for any $X\wedge \overline{Y}$ and  $Z\wedge \overline{W}$
$$
\langle \operatorname{Rm}(X\wedge\overline{Y}),  \overline{Z\wedge \overline{W}}\rangle \doteqdot
\operatorname{Rm}(X\wedge\overline{Y} , \overline{Z\wedge \overline{W}})= R_{X\overline{Y}W \overline{Z}}.
$$
Hence for any $\Omega=(\Omega^{i\bar{j}})$, it is easy to check that $\langle \operatorname{Rm}(\Omega), \overline{\Omega}\rangle =R_{i\bar{j}k\bar{l}}\Omega^{i\bar{j}}\overline{\Omega^{k\bar{l}}} \in \mathbb{R}$. Hence one can identify (cf. \cite{Wi}) the condition  (NB) as
\be\label{NB}
\{\operatorname{Rm}\, |\, \langle \operatorname{Rm} (\Omega), \overline{\Omega}\rangle\ge 0, \text{ for any } \Omega, \operatorname{rank}(\Omega)=1\}.
\ee
Similarly, condition (NOB) is equivalent to
\be\label{NOB}
\{\operatorname{Rm}\, |\, \langle \operatorname{Rm} (\Omega), \overline{\Omega}\rangle\ge 0, \text{ for any } \Omega, \operatorname{rank}(\Omega)=1, \Omega^2=0\}.\ee
For the other two conditions we recall two operators from the study of the Ricci flow. The first one is the $\bar{\wedge}$ operator on $A, B $, any two Hermitian symmetric transformations on $T'M$,  defined by
\begin{eqnarray*}
(A\bar{\wedge} B)_{i\bar{j}k\bar{l}}&=&A_{i\bar{j}}B_{k\bar{l}}+B_{i\bar{j}}A_{k\bar{l}}
+A_{i\bar{l}}B_{k\bar{j}}+B_{i\bar{l}}A_{k\bar{j}}\\
&=&2\langle(A\wedge \bar{B}+B\wedge \bar{A}) (e_i\wedge e_{\bar{j}}), \overline{e_l\wedge e_{\bar{k}}}\rangle\\&\quad& +2\langle (A\wedge \bar{B}+B\wedge \bar{A}) (e_k\wedge e_{\bar{j}}), \overline{e_l\wedge e_{\bar{i}}}\rangle.
\end{eqnarray*}
The resulting operator so defined is also a K\"ahler curvature operator which , in particular, satisfies the  first Bianchi identity.  Here $(A\wedge B)(X\wedge Y)=\frac{1}{2}(A(X)\wedge B(Y)+B(X)\wedge A(Y))$ as defined in \cite{Wi}. This operator is the one involved in the $\mathsf{U}(n)$-invariant irreducible decomposition of the space of the K\"ahler curvature operators. Now the condition (NQOB) is equivalent to
\be \label{NQOB}
\{\operatorname{Rm} \, |\, \langle \operatorname{Rm}, A^2\bar{\wedge}\operatorname{id}-A\bar{\wedge}A\rangle\ge 0, \text{ for all Hermitian symmetric } A\}.
\ee
For (NCF) it can be identified with that
\be \label{NCF}
\operatorname{Ric}\wedge\operatorname{id}\wedge\operatorname{id} -2\operatorname{Rm}\wedge \operatorname{id}\ge 0
\ee
on the space of $(2,1)$-forms. Here for $X\wedge Y \wedge Z$, 
$\operatorname{Rm}\wedge \operatorname{id}(X\wedge Y\wedge Z)= \operatorname{Rm} (X\wedge Y)\wedge Z- \operatorname{Rm} (X\wedge Z)\wedge Y +X\wedge  \operatorname{Rm}  (Y\wedge Z)$, $ \operatorname{Ric}\wedge\operatorname{id}\wedge\operatorname{id}$ is defined as $\left(\operatorname{Ric}\wedge\operatorname{id}\right)\wedge \operatorname{id}$. 
 
 It is known that (NB), and (NOB) are Ricci flow invariant conditions \cite{Wi}.  It would be interesting to find out about (NQOB) and (NCF). Our speculation is that condition (NCF) follows from (NQOB) and $\Ric\ge 0$. If this speculation holds (whose validity can be easily checked when $n=2$)  one can have an alternate proof of the result in Section 4.

\bibliographystyle{amsplain}

\end{document}